\DeclareMathOperator{\crit}{crit}
\DeclareMathOperator{\rng}{rng}
\newcommand{\rnglpo}{\operatorname{rng^+}}
\newcommand{\al}{\alpha}
\newcommand{\be}{\beta}
\newcommand{\ga}{\gamma}
\newcommand{\ka}{\kappa}
\newcommand{\calL}{\mathcal{L}}
\newcommand{\from}{\colon}
\newcommand{\of}{\circ}
\newcommand{\restr}{\upharpoonright}
\newcommand{\st}{\,|\,}
\newcommand{\sat}{\vDash}
\newcommand{\calA}{\mathcal{A}}
\newcommand{\calP}{\mathcal{P}}
\newcommand{\starint}[1]{\overset{*}{\cap}V_{#1}}
\author[a]{Andrew D. Brooke-Taylor}
\author[b]{Scott Cramer}
\author[c]{Sheila K. Miller Edwards}
\affil[a]{School of Mathematics\\ University of Leeds\\ Leeds LS2 9JT\\ United Kingdom \\\href{mailto:a.d.brooke-taylor@leeds.ac.uk}{a.d.brooke-taylor@leeds.ac.uk}}
\affil[b]{Department of Mathematics\\California State University, San Bernardino\\5500 University Parkway, San Bernardino, CA 92407\\United States of America\\\href{mailto:scott.cramer@csusb.edu}{scott.cramer@csusb.edu}}
\affil[c]{School of Mathematical and Natural Sciences\\ New College of Interdisciplinary Sciences\\ Arizona State University\\ 4701 W Thunderbird Rd, Glendale AZ, 85306\\ United States of America\\\href{mailto:sheila.miller@asu.edu}{sheila.miller@asu.edu}}
\title{A free two-generated left distributive algebra of elementary embeddings}
\date{}
\begin{document}

\newtheorem{thm}{Theorem}
\newtheorem{cor}[thm]{Corollary}
\newtheorem{lem}[thm]{Lemma}
\newtheorem{prop}[thm]{Proposition}
\newtheorem{conj}[thm]{Conjecture}
\newtheorem{clm}[thm]{Claim}
\newtheorem{fact}[thm]{Fact}
\newtheorem{qtn}[thm]{Question}

\theoremstyle{remark}
\newtheorem{rmk}[thm]{Remark}

\theoremstyle{definition}
\newtheorem{dfn}[thm]{Definition}

\newcommand{\la}{\left <}
\newcommand{\ra}{\right >}
\newcommand{\Acal}{\mathcal{A}}
\newcommand{\Ecal}{\mathcal{E}}
\newcommand{\Dcal}{\mathcal{D}}
\newcommand{\La}{\mathcal{L}}
\newcommand{\M}{\mathcal{M}}
\newcommand{\Bcal}{\mathcal{B}}
\newcommand{\Pcal}{\mathcal{P}}
\newcommand{\Rcal}{\mathcal{R}}
\newcommand{\Wcal}{\mathcal{W}}
\newcommand{\K}{\mathcal{K}}
\newcommand{\Mo}{\mathbb{M}}
\newcommand{\Scal}{\mathcal{S}}
\newcommand{\uphp}{\upharpoonright}
\newcommand{\ov}{\overline}
\newcommand{\aph}{{\aleph_0}}
\newcommand{\R}{\mathbb{R}}
\newcommand{\sub}{\subseteq}
\newcommand{\cof}{\text{cof}}
\newcommand{\AD}{\text{AD}}
\newcommand{\ADR}{\text{AD}_\mathbb{R}}
\newcommand{\ADP}{\text{AD}^+}
\newcommand{\HOD}{\text{HOD}}
\newcommand{\OD}{\text{OD}}
\newcommand{\ot}{\text{ot}}
\newcommand{\Ord}{\text{Ord}}
\newcommand{\ZFC}{\textup{ZFC}}
\newcommand{\Hom}{\text{Hom}}
\newcommand{\rest}{\upharpoonright}
\newcommand{\Ult}{\text{Ult}}
\newcommand{\DC}{\text{DC}}
\newcommand{\HOM}{\text{Hom}}
\newcommand{\cspace}{{}^\omega 2}
\newcommand{\GCH}{\text{GCH}}
\newcommand{\CH}{\text{CH}}
\newcommand{\Prod}{\prod}
\newcommand{\reals}{\R}
\newcommand{\forces}{\Vdash}
\newcommand{\Coll}{\text{Coll}}

\newcommand{\lvl}{L(V_{\lambda+1})}
\newcommand{\vl}{(V_{\lambda+1})}
\newcommand{\vlm}{{V_{\lambda+1}}}
\newcommand{\vlb}{(V_{\bar \lambda+1})}
\newcommand{\vlmb}{{V_{\bar \lambda+1}}}
\newcommand{\vlmbj}{{V_{\bar \lambda_J+1}}}

\newcommand{\blam}{{\bar \lambda}}
\newcommand{\hJ}{{\pi_J}}
\newcommand{\hK}{{\pi_K}}
\newcommand{\hL}{{\pi_L}}
\newcommand{\hhJ}{{\hat J}}
\newcommand{\hhK}{{\hat K}}
\newcommand{\hhL}{{\hat L}}
\newcommand{\Jt}{{(J, \la j_i \ra)}}
\newcommand{\Kt}{{(K, \la k_i \ra)}}
\newcommand{\Lt}{{(L, \la \ell_i \ra)}}
\newcommand{\vbbeta}{{\vec{\bar{\beta}}}}
\newcommand{\bbeta}{{\eta'}}
\newcommand{\balpha}{{\eta}}
\newcommand{\rhoa}{{\rho_\alpha}}
\newcommand{\rhoba}{{\rho_\balpha}}
\newcommand{\mvar}{{\xi}}
\newcommand{\svar}{{\alpha}}
\newcommand{\jseq}{{\la j_i \ra}}
\newcommand{\jpseq}{{\la j_i' \ra}}
\newcommand{\kseq}{{\la k_i \ra}}
\newcommand{\kpseq}{{\la k_i' \ra}}
\newcommand{\jbseq}{{\la j_i \rest L_\beta \vl \ra}}
\newcommand{\kbseq}{{\la k_i \rest L_\beta \vl \ra}}

\newcommand{\fcal}{\mathcal{F}}
\newcommand{\Mcal}{\mathcal{M}}
\newcommand{\mcal}{\mathcal{M}}
\newcommand{\Fcal}{\mathcal{F}}
\newcommand{\fix}{\text{Fix}}
\newcommand{\fpf}{\text{Fpf}}

\newcommand{\gothm}{{\mathfrak{M}}}
\newcommand{\mfrak}{{\mathfrak{M}}}

\newcommand{\Q}{\mathbb{Q}}
\newcommand{\Pbb}{\mathbb{P}}
\newcommand{\Bbbb}{\mathbb{B}}

\newcommand{\Emb}{\mathcal{E}}
\newcommand{\EMB}{{\text{EMB}}}
\newcommand{\Il}{{\text{InvL}}}
\newcommand{\IL}{{\text{INVL}}}
\newcommand{\invlim}{{InvLim}}
\newcommand{\dom}{\text{dom}}
\newcommand{\Norm}{\text{Norm}}
\newcommand{\Fix}{\text{Fix}}

\newcommand{\Jext}{J^{\text{ext}}}
\newcommand{\Jcext}{J^{\text{cext}}}
\newcommand{\Kext}{K^{\text{ext}}}
\newcommand{\Kcext}{K^{\text{cext}}}
\newcommand{\erng}{{\text{erng}}}
\newcommand{\ext}{\text{ext}}

\newcommand{\lb}{\left |}
\newcommand{\rb}{\right |}

\newcommand{\Hull}{\text{Hull}}
\newcommand{\Def}{\text{Def}}
\newcommand{\sseq}{\subseteq}
\newcommand{\rank}{\text{rank}}

\newcommand{\Kcal}{\mathcal{K}}
\newcommand{\LR}{L(\mathbb{R})}
\newcommand{\Vcal}{\mathcal{V}}
\newcommand{\Tcal}{\mathcal{T}}

\newcommand{\len}{\text{len}}
\newcommand{\Aomegae}{\Acal_\omega^{\text{emb}}}
\newcommand{\irng}{\text{irng}}
\newcommand{\id}{\text{id}}

\newcommand{\Jv}{(J, \vec j)}
\newcommand{\Kv}{(K, \vec k)}
\newcommand{\otp}{\text{otp}}
\newcommand{\hqod}{\text{HqOD}}
\newcommand{\jhqod}{\text{$j$-HqOD}}

\maketitle
\newpage
\begin{abstract}
The set-theoretic large cardinal axiom known as I3 posits the existence of a non-trivial \emph{rank-to-rank embedding} from an 
initial segment of the universe of sets into itself.
Laver \cite{Laver:1992} showed that the algebra
generated by a single such 
embedding under the operation of \emph{application} is in fact the free left distributive 
(LD) algebra on one generator.  This and associated theorems using the set-theoretic structure
of the embeddings yielded numerous results about general LD algebras under the
assumption of I3, only some of which 
have since been proven without the use of such a strong axiom. 
A natural question is whether, under the assumption of I3, 
one can obtain a free LD algebra of embeddings
on more than one generator.  
Here we show that, under an assumption only just above I3 in the large 
cardinal hierarchy (namely, I2), we indeed obtain a two-generated free left distributive algebra of rank-to-rank embeddings.
\end{abstract}
\section{Introduction}
Left distributive operations arise in various areas of mathematics, including the theories of knots, braids, and groups, and also large cardinal axioms of set theory. 
A left distributive algebra (LD algebra for short, also sometimes referred to
in the literature as a \emph{shelf} \cite{Crans:2004}) 
is an algebraic structure for which left multiplication by
any element is a homomorphism; 
that is, a left distributive algebra is a set $A$ endowed with a binary operation $\cdot$ such that for all $x$, $y$, and $z$ in $A$, $x \cdot(y \cdot z) = (x \cdot y)\cdot(x \cdot z)$. Familiar examples of left distributive operations include group conjugation and the weighted mean $x \cdot y = rx + (1-r)y$. 
In both of these examples, and many others that arise in the study of knots and braids, left multiplication by an element of the set is in fact an automorphism; structures satisfying this extra requirement were called automorphic sets by Brieskorn \cite{Brieskorn:1988} and in the ensuing set-theoretic literature,
but are commonly referred to as racks in the algebra literature
after Fenn and Rourke \cite{fenn1992racks} (who attribute the terminology to Conway and Wraith). 
Idempotent automorphic sets 
(those in which $x \cdot x = x$ for every element $x$) 
were named quandles by Joyce \cite{Joyce:1982}.

Beginning in the 1980s, a significant relationship was discovered between left distributive algebras and certain large cardinal axioms in set theory. A large cardinal axiom is one that implies the consistency of the usual axioms of set theory (the Zermelo-Frankel axioms, plus the Axiom of Choice: ZFC), and therefore–––by G\"odel's incompleteness theorems–––cannot be derived from ZFC. The large cardinal axioms themselves are (essentially)
linearly ordered in consistency strength, and the large cardinals that give rise to left distributivity are among the largest not known to be inconsistent. 

Large cardinals in the upper regions of the large cardinal hierarchy are
generally characterized by the existence of a nontrivial 
\emph{elementary embedding} $j$ from the universe of sets $V$ to a transitive 
model $M$ of the ZFC axioms. 
A first order language $\calL$ may consist of symbols for 
constants, operations and relations; 
an $\calL$-structure is then a set endowed with
choices of constants, operations and relations interpreting those symbols,
and an elementary embedding from one $\calL$-structure to another 
is a function
preserving \emph{all} first-order formulas for the language $\calL$
(that is, all formulas built up as expected from the symbols in $\calL$,
logical connectives, and quantifiers, with variables ranging over \emph{elements}
of the structures).
Unless otherwise stated, the language assumed for elementary embeddings 
in the large cardinals setting
is the \emph{language of set theory} $\{\in\}$, containing a single binary
relation symbol $\in$ interpreted as the membership relation;
other familiar set-theoretic constants, operations and relations 
like $\emptyset$, $\cap$ and $\subset$ 
must also be preserved by elementary embeddings
as they are definable in the language of set theory.

Richard Laver proved that certain large cardinal embeddings give rise to a 
\emph{free} left distributive algebra in a natural way \cite{Laver:1992}. 
Namely, one of the strongest large cardinal axioms not known to be inconsistent
posits the existence of elementary embeddings $j\from V_\lambda\to V_\lambda$,
referred to as \emph{rank-to-rank embeddings} or as \emph{I3 embeddings}.
There is a natural
\emph{application} operation definable on pairs of such embeddings,
and this operation is automatically left distributive, by elementarity.

Laver demonstrated the existence of a normal form for elements of a conservative extension of the free one-generated left distributive algebra
$\mathcal{A}$, 
thereby showing that the existence of a nontrivial rank-to-rank embedding implies that the left subterm relation $<_L$ linearly orders $\mathcal{A}$, 
that the word problem for $\mathcal{A}$ is decidable, and that 
the left distributive algebra generated by a single nontrivial 
rank-to-rank embedding under the application operation is in fact
free (see Section \ref{LCintro}). He later proved \cite{Laver:1997}, again under the assumption of the existence of a rank-to-rank embedding, that a certain family of finite left distributive algebras now known as \emph{Laver tables} approximate the free left distributive algebra $\mathcal{A}$. 
Laver's proof of linearity employs the large cardinal assumption to secure the irreflexivity of $<_L$ and uses the normal form to demonstrate that for all $a$ and $b$ in $\mathcal{A}$, either $a \leq_L b$ or $b \leq_L a$. 

Patrick Dehornoy \cite{Dehornoy:1994} later proved the linearity of $<_L$ and the solvability of the word problem within ZFC; he defined an operation $[ \ \ ]$ on an extension of Artin's braid group to infinitely many strands, $B_{\infty}$, and showed that the algebra generated by the closure of a single element of $B_{\infty}$ under $[ \ \ ]$ is isomorphic to $\mathcal{A}$. David Larue, also working in $B_{\infty}$, simplified Dehornoy's proof considerably \cite{Larue:1994}. 
In his PhD thesis Larue built a representation of the free left distributive algebra on $n$ generators (for $n$ a natural number) \cite{LarueThesis:1994}. That expansion of Artin's braid group on infinitely many generators is now known as Larue's group.

A similar example of a free left distributive algebra on more than one generator within large cardinal theory remained elusive. Laver's result that the existence of a rank-to-rank embedding gives rise to the free left distributive algebra on a single generator was announced in 1986 \cite{laver1986elementary}, but since then it has remained unclear whether one can construct a provably free left distributive algebra of embeddings on more than one generator assuming only the existence of a single rank-to-rank embedding from $V_{\lambda}$ to $V_{\lambda}$. 

Dimonte \cite{Dimonte:2019} recently demonstrated that an extremely strong
large cardinal axiom extending rank-to-rank embeddings can give rise to a left distributive algebra with two 
generators with very different properties, neither of which generates the other,
but whether Dimonte's algebra is free remains open.

The main result of this paper is an affirmative answer to the question from a large
cardinal much closer to I3: we construct a free left distributive algebra of embeddings on two generators from a single $\Sigma^1_1$-elementary embedding 
from $V_\lambda$ to $V_\lambda$.
The assumption that such an embedding exists is known as I2,
and is the least standard large cardinal assumption stronger than I3;
we discuss it further in 
Sections \ref{LCintro} and \ref{Main}. 
We use the extra strength provided by $\Sigma^1_1$-elementarity in two distinct ways in the proof: once in obtaining 
``square roots,'' used to show that the ranges of the embeddings we construct are eventually disjoint in a strong way, and
again later to show that this disjointness is preserved under application of an embedding.

The essence of the construction goes like this: from the assumption of the existence of a single, nontrivial, $\Sigma^1_1$-elementary rank-to-rank embedding, we use inverse limit techniques to construct $\Sigma^1_1$-elementary embeddings 
$k$ and $\ell$ with ranges that are eventually disjoint. 
In particular $k$ and $\ell$ have no embeddings in common in their ranges, 
and we show furthermore that it is possible to propagate this property to the embeddings in their ranges, showing that their `iterated ranges' are disjoint. 
With this disjointness
we can use 
Dehornoy's quadrichotomy \cite[Chapter V, Proposition 3.1]{Dehornoy:2000}
to show that two embeddings in the closure of $\{k,\ell\}$ under application are equivalent just in case they are equivalent under the left distributive law, 
and hence that the resulting algebra of embeddings 
is isomorphic to the free left distributive algebra on two generators. 

The paper is organized as follows: in Section \ref{FreeLDs} we give further background on free left distributive algebras and state some open questions; in Section \ref{LCintro} we offer requisite background on large cardinal elementary embedding axioms; in Section \ref{Tools} we present the main technical tools used in the proof of the main result; in Section \ref{Main} we prove our main result.
A reader already familiar with elementary embeddings can skip Section \ref{LCintro}, and one so inclined can reference Section \ref{FreeLDs} without reading it in detail. The techniques in Section \ref{Tools} were originally introduced by Richard Laver(\cite{Laver:1997}, \cite{Laver:2001}) and were further developed by Scott Cramer \cite{Cramer:2015}.

\section{Free Left Distributive Algebras}\label{FreeLDs}

In this section we present definitions and results on free left distributive algebras necessary for our main result. To readers interested in a more comprehensive treatment of the background material, we suggest \cite{Dehornoy:2000}--- especially for the connections between braids and self-distributivity---and \cite{LaverMiller:2013} for a survey of basic results and a proof of the existence of a particular normal form in the single generator case.

For any cardinal $\gamma$, one can create a free left distributive term algebra $\mathcal{A}_\gamma$ on $\gamma$ generators and a single (left distributive) binary operation $\cdot$ by forming the collection $A_{\gamma}$ of all terms in the $\gamma$ generators and then considering their equivalence classes modulo the left distributive law (LD). To be explicit, two terms $u$ and $v$ in ${A}_{\gamma}$ are equivalent under the left distributive law, $u \equiv_{LD} v$, if and only if one can start with $u$ and obtain $v$ by applying a series of substitutions of the form $a \cdot (b \cdot c) \leftrightarrow (a \cdot b) \cdot (a \cdot c)$ to subterms. 
For the remainder of the paper we adopt the standard conventions of writing $ab$ for $a \cdot b$ and $a_0a_1a_2 \cdots a_n$ for $(((a_0a_1)a_2) \cdots a_n)$ where $a, b, a_0, \ldots a_n$ are elements of any left distributive algebra. 
We write $\mathcal{A}$ for $\mathcal{A}_1$ and $\Acal_j$ (respectively $\mathcal{A}_{k, \ell}$) for the algebra of embeddings generated as the closure of $j$ (resp. $k$ and $\ell$) under the application
operation (defined in Section \ref{LCintro} below).\footnote{For ease of notation, we do not write $\mathcal{A}_{\{j\}}$ or $\mathcal{A}_{\{k,\ell\}}$. When the index is a cardinal $\kappa$, $\mathcal{A}_{\kappa}$ is the free left distributive algebra on $\kappa$ generators. We use the latter notation only for $\mathcal{A}_2$ and $\mathcal{A}_n$.}

Most left distributive algebras familiar from classical mathematics are idempotent: $g \cdot g = g$ for every element $g$ of the algebra. Because no generator of a free LD algebra can be written as a product, familiar left distributive algebras, like the one induced by conjugation, are not free. 

Laver was the first to establish the existence of a natural, nontrivial, free left distributive algebra \cite{Laver:1992}. Specifically, he proved that for any nontrivial rank-to-rank embedding $j$, the left distributive algebra $\mathcal{A}_j$ created by closing $\{j\}$ under the application operation is free. He did this by showing that the relation $<_L$ is a linear ordering of the free left distributive algebra on one generator, where $a <_L b$ just in case $b = ab_1 \cdots b_k$ for some $b_1, \ldots, b_k$ in $\mathcal{A}$. He thereby also established that the word problem for $\mathcal{A}$ is solvable and that $\mathcal{A}_j \cong \mathcal{A}$. Note however that the existence of such an embedding $j$ is a strong large cardinal axiom.

To prove linearity of $<_L$ one must show irreflexivity (that no element is less than itself in the sense of $<_L$) and connectivity (that for every pair $a$ and $b$ of elements, either $a \leq_L b$ or $b \leq_L a$). Laver \cite{Laver:1992} obtained irreflexivity from the structure on the large cardinal embeddings: for any nontrivial rank-to-rank embedding $k$, $k \neq k k_1 \cdots k_n$ for any nontrivial elementary embeddings $k_1, \ldots, k_n$. Because the relation $<_L$ is irreflexive on the (left distributive) algebra of embeddings, it must also be irreflexive on the free LD algebra. Working in an expanded left distributive algebra $\mathcal{P}$ that includes a second, `composition' operation, Laver showed connectivity by demonstrating the existence of a normal form theorem for the free LD algebra \cite{Laver:1992}, thereby also showing that $\mathcal{A}$ is left cancelative (so $ab = ac$ if and only if $b = c$ and $ab <_L ac$ if and only if $b <_L c$). He later proved the existence of another normal form called the division form that is more useful for applications \cite{Laver:1995}. The expanded algebra $\mathcal{P}$ mimics the structure of elementary embeddings with respect to both application and composition, satisfying the following laws: $(a \circ b) \circ c = a \circ (b \circ c), (a \circ b)c = a(bc), a(b \circ c) = ab \circ ac, a \circ b = ab \circ a$. The first two assert the usual properties of composition and the third says that left multiplication is still a homomorphism in the expanded algebra. The final identity  is frequently called the `braid law' for reasons that will soon become clear.

Working in $\calP$ rather than $\calA$, and in the $\gamma$-generated case working in $\calP_\gamma$ rather than $\calA_\gamma$, 
naturally gives more flexibility but still allows one to prove results about $\calA_\gamma$: Laver showed
that $\calP_\gamma$ is conservative over $\calA_\gamma$ --- 
terms $u$ and $v$ in $A_\gamma$ are equal in $\calP_\gamma$ if and only if they are equal in $\calA_\gamma$ ---
and likewise $u<_L v$ in $\calP_\gamma$ if and only if $u<_L v$ in $\calA_\gamma$ (\cite[Lemma 3]{Laver:1992}; as noted for example in 
\cite[Theorem 1.2]{Laver:1996}, although the proof in \cite{Laver:1992} is presented for the single generator case, the argument goes through for any
number of generators).

There are deep connections between left distributive algebras and Artin's braid groups, which act on direct powers of racks 
and act partially on direct powers of algebras satisfying left cancellation. Nonadjacent generators of the braid group commute, and adjacent generators satisfy a `braiding' relation. More specifically, for $2 \leq N \leq \infty$, the braid group on $N$ strands is generated by $\sigma_1, \sigma_2, \ldots \sigma_i$ for $i \leq N$ subject to the relations $\sigma_i\sigma_j = \sigma_j\sigma_i$ if $|i-j| >1$ and $\sigma_i\sigma_{i+1}\sigma_i = \sigma_{i+1}\sigma_i\sigma_{i+1}$.

Dehornoy, using an extension of Artin's braid group to infinitely many generators, proved the linearity of $<_L$ without the large cardinal assumption, that is, within ZFC \cite{Dehornoy:1994}. In particular he proved that free LD algebras are \emph{confluent}: two terms $u$ and $v$ in $\mathcal{A}_{\gamma}$ are equivalent if and only if they have a common forward LD-expansion. Namely, $u$ and $v$ are equivalent just in case there is a term $w$ with $u \equiv w \equiv v$, and $w$ can be obtained from $u$ and from $v$ by substituting subterms of the form $ab(ac)$ for subterms of the form $a(bc)$.

The ordering $<_L$ is not a linear ordering of free algebras on more than one generator because the generators themselves are incomparable. Say that $a$ and $b$ have a \emph{variable clash} ($a \nsim b$) if and only if for some (possibly empty) $p$, $a \equiv pxa_1 \cdots a_n$ and $b \equiv pyb_1 \cdots b_k$, where $x$ and $y$ are distinct generators (equivalently: there exists a $p$ in $\Pcal_{\gamma}$ such that $px \leq_L a$ and $py \leq_L b$). Note that  the term $p$ might be in $\Pcal_{\gamma}$ and not in $\Acal_{\gamma}$. Crucially for the sequel, Dehornoy proved that confluence still holds in the free left distributive algebra on $\gamma$ generators \cite{Dehornoy:1989}, where now we have a quadrichotomy in place of linearity. In particular, for any two words in $a$ and $b$ in $\mathcal{A}_{\gamma}$, either $a$ is LD-equivalent to $b$, one of $a$ or $b$ is a left subterm of the other ($a <_L b$ or $b <_L a$), or 
$a$ and $b$ 
have a variable clash; futhermore, all of these relations are exhibited by confluence.

\section{Large Cardinal Elementary Embeddings}\label{LCintro}

G\"odel showed in the 1930s that no consistent recursively defined 
axiom system incorporating arithmetic can give a finite proof of its own consistency. However, the consistency of arithmetic is provable from the axioms of set theory, which constitute a
stronger system than arithmetic alone.  Likewise, the consistency of the axioms of ZFC is a consequence of the assumption of still-stronger axioms known as \emph{large cardinal axioms} or simply \emph{large cardinals}. 

One of the ZFC axioms, the Axiom of Infinity, 
implies the consistency of the other axioms: using it one may construct $V_{\omega}$, the set of all hereditarily finite sets, 
which is a model of all the axioms of set theory except the axiom of infinity.
In a similar way, large cardinal axioms 
generally assert the existence of an infinite cardinal $\ka$ with properties that ensure that $\ka$ is very large,
and in particular that $V_\ka$ is a model of ZFC along with weaker large cardinal axioms.
G\"odel initiated a program to understand the logical consequences of these axioms and other axiom systems known to imply the consistency of ZFC. Although G\"odel's hopes of settling the generalized continuum hypothesis in this way were not realized, the study of large cardinal axioms evolved to be an important area of set theory.

The large cardinal axioms form the benchmarking hierarchy of consistency strengths in set theory, where large cardinal axiom $B$ is stronger than large cardinal axiom $A$ if the consistency of $\ZFC + B$ implies that of $\ZFC+A$. The smaller large cardinal axioms are compatible with the claim that the universe of sets is relatively small and contains only ordinals and sets definable from earlier-constructed sets (the axiom $V = L$); such large cardinal axioms tend to be characterized by combinatorial properties. Large cardinals that imply the universe is too large to satisfy $V=L$ are sometimes called `large' large cardinal axioms and can usually be viewed as hypothesizing the existence of a nontrivial elementary embedding from the universe of sets, $V$, into a transitive substructure $M$ of $V$ satisfying specified closure conditions. The large cardinal axiom is the assertion that such a nontrivial elementary embedding exists; any cardinal that is the least cardinal moved by such an embedding is a such-and-such cardinal.  Asserting progressively stronger closure conditions on the target model $M$ gives a sequence of progressively stronger large cardinal axioms.

As will be detailed below, there is an upper bound to the possible closure of the target model $M$; if we wish to keep the 
Axiom of Choice, the target model $M$ cannot be all of $V$ (see Theorem \ref{Kunen}). There are, however, large cardinal axioms described by embeddings for which the domain and target models are the same, namely a rank-initial segment $V_{\lambda}$ of the universe into itself (where $\lambda$ necessarily a limit cardinal of cofinality $\omega$ or the ordinal successor of such a cardinal). It is these embeddings that are the subject of the rest of this paper.

In the remainder of this section we offer a summary of relevant definitions and facts about elementary embeddings. In all that follows, 
$\lambda$ will always denote a limit cardinal of cofinality $\omega$. 

We say that $j$ is an elementary embedding from $\langle V, \in \rangle$ into a transitive substructure $\langle M, \in \rangle$ if for every formula $\varphi(v_1, \ldots v_n)$ in the language of set theory $\{\in\}$ and every tuple $a_1, \ldots a_n$ of elements of $V$, 
$V \models \varphi[a_1, \ldots a_n]$ if and only if $M \models \varphi[j(a_1), \ldots j(a_n)]$. We will consider only nontrivial elementary embeddings $j$: those that are not the identity function. An important attribute of a nontrivial elementary embedding is the first ordinal it moves. Say that $\kappa$ is the \emph{critical point} of $j$ if and only if $\kappa$ is the least ordinal for which $\kappa < j(\kappa)$.  Note that 
by an inductive argument, the image of $\gamma$ is at least $\gamma$ for all ordinals $\gamma$. Thus, $j(\alpha) = \alpha$ for all $\alpha < \kappa$. 

The following well-known result begins the upward climb of the strength of these embeddings.

\begin{prop} Suppose that $j: V \to M$ is an elementary embedding and that $M$ is transitive. Then for $\kappa = \crit(j)$, $V_{\kappa+1}$ is contained as a subset of $ M$. 
\end{prop}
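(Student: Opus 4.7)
The plan is to first show that $j$ fixes every element of $V_\kappa$ pointwise (so in particular $V_\kappa \subseteq M$), next to verify that $V_\kappa$ itself lies in $M$, and finally to represent each subset of $V_\kappa$ as a first-order definable slice of its $j$-image.

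For the first stage, I would prove by transfinite induction on rank that $j(x) = x$ for every $x$ of rank less than $\kappa$. The base case $j(\emptyset) = \emptyset$ is immediate by elementarity. For the inductive step, fix $x$ of rank $\alpha < \kappa$ and suppose $j(y) = y$ whenever $\rank(y) < \alpha$. For each $y \in x$, elementarity gives $j(y) \in j(x)$, and the inductive hypothesis gives $j(y) = y$, so $x \subseteq j(x)$. For the reverse inclusion, elementarity yields $\rank(j(x)) = j(\rank(x)) = j(\alpha) = \alpha$ (using that rank is absolute for transitive classes), so every $z \in j(x)$ has rank less than $\alpha$; the inductive hypothesis then gives $j(z) = z$, and elementarity applied to $j(z) \in j(x)$ returns $z \in x$. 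Hence $j(x) = x$, and since $j(x) \in M$ we conclude $x \in M$; thus $V_\kappa \subseteq M$.

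Next I would verify $V_\kappa \in M$. Since the cumulative hierarchy is absolute for transitive class models containing all the relevant sets, and $V_\kappa \subseteq M$, we have $V_\kappa^M = V_\kappa$. By elementarity applied to the defining property of the cumulative hierarchy, $j(V_\kappa) = V_{j(\kappa)}^M$, and since $\kappa < j(\kappa)$ the set $V_\kappa = V_\kappa^M$ is an element of $V_{j(\kappa)}^M$, hence of $M$. For the last step, let $x \in V_{\kappa+1}$, so $x \subseteq V_\kappa$. I would show $x = j(x) \cap V_\kappa$, an intersection of two sets in $M$. The inclusion $x \subseteq j(x) \cap V_\kappa$ follows from $y = j(y) \in j(x)$ for $y \in x$; the reverse follows because any $z \in j(x) \cap V_\kappa$ satisfies $j(z) = z \in j(x)$, whence $z \in x$ by elementarity.

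The main subtlety lies in the reverse inclusion of the inductive step above: it depends on $\alpha < \kappa$ being a fixed point of $j$, which bounds the rank of elements of $j(x)$ by $\alpha$ and so allows the inductive hypothesis to apply. Once that rank-pinning observation is in place, the rest of the argument is a routine combination of the absoluteness of $V_\alpha$ for transitive classes and elementarity of $j$.
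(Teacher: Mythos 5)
Your proof is correct and takes essentially the same route as the paper's: show by induction on rank that $j$ fixes every element of $V_\kappa$ pointwise, then recover each $A \subseteq V_\kappa$ as the slice $j(A) \cap V_\kappa$ computed inside $M$. The only difference is that you explicitly verify $V_\kappa \in M$ before forming the intersection, a step the paper leaves implicit in writing $A = j(A)\cap V_\kappa^M$.
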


\begin{proof} As $\kappa = \crit(j)$ is the least ordinal moved by $j$, $j(\alpha) = \alpha$ for every $\alpha < \kappa$. Furthermore, we have inductively that for every set $a$ of rank less than $\kappa$, $j(a) = a$. In particular, for every $a \in V_\kappa$, $j(a) = a \in M$, and thus $V_\kappa \sseq M$ and $V$ and $M$ agree on sets of size less than $\kappa$: $V_\kappa^M=V_\kappa$.

To see that $V_{\kappa+1} \sseq M$, we show that every subset $A$ of $V_{\kappa}$ is an element of $M$. Let $A \sseq V_{\kappa}$ (so $A \in V_{\kappa+1}$). By the above and elementarity, we have that $j(A) \sseq V_{j(\kappa)}^M$ and $j(A) \cap V_{\kappa}^M = j(A) \cap V_\kappa$. For every $a \in A \sseq V_{\kappa}$, we have $j(a) = a$, so elementarity gives that $a \in A$ if and only if $j(a) = a \in j(A)$. Therefore $j(A) \cap V_\kappa = A \cap V_{\kappa} = A$. Hence $A = j(A) \cap V_\kappa^M$ is itself in $M$.  
\end{proof}

We can go no farther for free. Suppose that $U$ is a $\kappa$-complete ultrafilter on $\kappa$ and construct the ultrapower $M = \Ult(V, U)$\footnote{Defining ultrafilters, ultrapowers, and ultraproducts is outside the scope of this paper. For details, see Kanimori \cite{Kanamori:2009}.}. Then the ultrafilter $U$ is in $V_{\kappa+2}$ but not in $M$. In fact, if $j: V \to M$ is elementary and $V_{\kappa+2} \sseq M$, then $\kappa = \crit(j)$ is measurable in $M$, which implies by standard techniques that $\kappa$ is a limit of measurables in $V$. In short, for such elementary embeddings $j$, we always have $V_{\kappa+1} \sseq M$, but to get $V_{\kappa+2} \sseq M$ requires a stronger large cardinal hypothesis. The slogan form of the previous observation is:

\begin{quote}
\emph{When considering elementary embeddings $j: V \to M$: The more that $M$ agrees with $V$, the stronger the corresponding large cardinal axiom.}
\end{quote}

A natural culmination to this hierarchy of axioms would be for the target model $M$ to be all of $V$. In his 1967 thesis William Reinhardt suggested an axiom asserting the existence of such an embedding.

\begin{dfn}[Reinhardt \textup{(NGB)}\footnote{The definition takes place not in ZFC but in von Neumann-G\"odel-Bernays class theory that allows quantification over classes as well as sets.}]
A cardinal $\kappa$ is a \emph{Reinhardt cardinal} \index{Reinhardt cardinal} if $\kappa = \crit(j)$ for some nontrivial elementary embedding $j: V \to V$. 
\end{dfn}

In 1971, Kenneth Kunen showed with the following stronger result that Reinhardt cardinals are inconsistent with the Axiom of Choice. 

\begin{thm}[Kunen \textup{(ZFC)}] \label{Kunen} \index{Kunen's theorem}There is no ordinal $\lambda$ such that there is a non-trivial elementary embedding $j: V_{\lambda+2} \to V_{\lambda+2}$. 
\end{thm}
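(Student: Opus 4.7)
The plan is to derive a contradiction by pitting an $\omega$-Jónsson function on $\lambda$ (supplied by the Erd\H{o}s--Hajnal theorem) against two structural facts about $j$: that the critical point $\kappa = \crit(j)$ lies outside the range of $j$ on ordinals, and that $\lambda$ itself is fixed. The preliminaries are short. (i) \emph{$\kappa \notin j[\Ord]$:} for $\alpha < \kappa$ we have $j(\alpha) = \alpha < \kappa$, and for $\alpha \geq \kappa$ strict monotonicity of $j$ on ordinals yields $j(\alpha) \geq j(\kappa) > \kappa$. (ii) \emph{$j(\lambda) = \lambda$:} the ordinals of $V_{\lambda+2}$ are exactly those below $\lambda+2$, so strict monotonicity forces $j(\lambda+1) = \lambda+1$, whence $j(\lambda) \leq \lambda$; the reverse inequality is the routine induction that $j(\alpha) \geq \alpha$ for all ordinals $\alpha$. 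A brief separate argument (using that $\lambda$ equals $\sup_n j^n(\kappa)$) shows $\lambda$ is a limit cardinal of cofinality $\omega$, making it a legitimate input to Erd\H{o}s--Hajnal.

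Next I would invoke the Erd\H{o}s--Hajnal theorem to fix an \emph{$\omega$-Jónsson function} $f\from [\lambda]^\omega \to \lambda$, i.e.\ one with $f\bigl[[Y]^\omega\bigr] = \lambda$ for every $Y \sseq \lambda$ of size $\lambda$. Since $f \in V_{\lambda+1} \sseq V_{\lambda+2}$ and the Jónsson property is expressible by a first-order formula in $V_{\lambda+2}$ with parameters $f$ and $\lambda$, elementarity together with $j(\lambda) = \lambda$ gives that $j(f)\from [\lambda]^\omega \to \lambda$ is also $\omega$-Jónsson.

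Apply the Jónsson property of $j(f)$ to $X := j[\lambda] \sseq \lambda$, which has size $\lambda$ by the injectivity of $j$, and to the value $\kappa < \lambda$: there exists $A \in [X]^\omega$ with $j(f)(A) = \kappa$. Since $A \sseq j[\lambda]$, we may write $A = j[B]$ for the countable $B := j^{-1}[A] \sseq \lambda$. The identity $j(B) = j[B]$ for countable $B \sseq \lambda$ is obtained by fixing any enumeration $e\from \omega \to B$ in $V_{\lambda+1}$: elementarity combined with $j(n) = n$ for every $n < \omega$ gives $j(e)(n) = j(e(n))$, so $j(B) = \rng(j(e)) = j[B]$. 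Hence $A = j(B)$, and elementarity applied to $f(B) = f(B)$ yields $j(f)(j(B)) = j(f(B))$. Putting this together, $j(f(B)) = \kappa$, contradicting (i) since $f(B)$ is an ordinal.

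The conceptual heart of the argument is the clash between $j[\lambda]$ being large enough to activate the Jónsson property and structurally too sparse to contain a preimage of $\kappa$. The main technical subtlety is verifying the identity $j(B) = j[B]$ for countable $B$ (which rests on $j \upharpoonright \omega = \mathrm{id}$) and the transfer of the Jónsson property under $j$ (which uses that $j$ preserves $[\lambda]^\omega$, via $j(\omega) = \omega$ and $j(\lambda) = \lambda$); once these are in place, the Erd\H{o}s--Hajnal function delivers the contradiction almost immediately.
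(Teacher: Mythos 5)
The paper does not actually prove this theorem --- it defers to Kanamori for three proofs --- so there is no internal argument to compare against. What you have written is (a version of) Kunen's original proof via the Erd\H{o}s--Hajnal $\omega$-J\'onsson function, which is one of the standard routes, and the overall strategy is sound: transfer the J\'onsson property along $j$, evaluate at $Y=j[\lambda]$, and trap $\kappa$ inside $j[\mathrm{Ord}]$. Your verifications of $j(\lambda)=\lambda$, of $\kappa\notin j[\mathrm{Ord}]$, and of the identity $j(B)=j[B]$ for countable $B$ (via $j\upharpoonright\omega=\mathrm{id}$) are all correct. However, two steps need repair as written.

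First, the assertion that $f\in V_{\lambda+1}$ is false, and with the usual Kuratowski pairing $f$ is not even an element of $V_{\lambda+2}$: when $\mathrm{cf}(\lambda)=\omega$, the domain $[\lambda]^{\omega}$ contains sets $A$ unbounded in $\lambda$, which have rank $\lambda$, so the pairs $\langle A,\alpha\rangle$ have rank $\lambda+2$ and $f$ has rank $\lambda+3$. As written, $j(f)$ is simply undefined. The standard fix is to code $f$ ``flatly,'' e.g.\ coding $\langle A,\alpha\rangle$ by $(A\times\{0\})\cup(\{\alpha\}\times\{1\})\subseteq V_{\lambda}$; the coded $f$ is then a subset of $V_{\lambda+1}$, hence an element of $V_{\lambda+2}$ (though still not of $V_{\lambda+1}$), which is enough for $j(f)$ to exist and for the J\'onsson property to be first-order over $V_{\lambda+2}$. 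Second, your parenthetical appeal to ``$\lambda=\sup_{n}j^{n}(\kappa)$'' is unjustified: the theorem quantifies over \emph{all} ordinals $\lambda$, so you cannot assume $\lambda$ is a cardinal, let alone one of cofinality $\omega$, and Erd\H{o}s--Hajnal as you invoke it requires a cardinal. The usual repair is a preliminary reduction: put $\lambda'=\sup_{n}j^{n}(\kappa)$; one checks $\lambda'\le\lambda$ and $j(\lambda')=\lambda'$, that each $j^{n}(\kappa)$ is a cardinal (so $\lambda'$ really is a limit cardinal of cofinality $\omega$), and that $j$ restricts to a nontrivial elementary embedding of $V_{\lambda'+2}$ into itself (since $V_{\lambda'+2}$ is an element of $V_{\lambda+2}$ fixed by $j$ when $\lambda'<\lambda$); alternatively, run your argument for $\lambda'$ directly, noting that the coded J\'onsson function for $\lambda'$ lies in $V_{\lambda'+2}\subseteq V_{\lambda+2}$ and hence in the domain of $j$. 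With these two patches the proof is correct.
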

See Kanamori \cite{Kanamori:2009} for three proofs of Kunen's Theorem, the original form of which asserts that there are no Reinhardt cardinals. (The form we have given here is a corollary.)

Kunen's Theorem gives an upper bound on the possible closure of the target model. Below that upper bound are a number of axioms in which the domain and target models are rank initial segments of the universe $V_{\delta}$ for some ordinal $\delta$.  We will be interested in a family of related large cardinal axioms that assert the existence of embeddings $j:V_{\delta} \rightarrow V_{\delta}$ with various properties. We will begin our discussion with embeddings from $V_{\lambda}$ to $V_{\lambda}$. Such embeddings are called \emph{rank-to-rank (or rank-into-rank) embeddings}; because that name is also sometimes used for the whole family of related embeddings, we note that these are also called I3 embeddings. Recall that $\lambda$ is a limit ordinal of cofinality $\omega$ and that all elementary embeddings are assumed to be nontrivial, unless otherwise stated. (See the end of this section for more on this family of large cardinal axioms.)

Many interesting properties of rank-to-rank embeddings are a consequence of the basic fact that any elementary embedding $j: V_\lambda \to V_\lambda$ can be naturally extended to an elementary embedding on the power set of $V_\lambda$ (that is, to $\vlm$): For $A \sseq V_\lambda$ and $j: V_\lambda \to V_\lambda$ an elementary embedding, define $$j\cdot A = j A = j(A) = \bigcup_{\alpha < \lambda} j(A \cap V_\alpha).$$

The observation that every rank-to-rank embedding $j$ is itself a subset of $V_\lambda$ 
(when encoded as usual for functions as the set of ordered pairs $\la x,j(x)\ra$) allows us to define the application of $j$ to itself, 
or indeed to any other rank-to-rank embedding $k\from V_\lambda\to V_\lambda$. 
The result $jk$ will be another rank-to-rank embedding: 
see Proposition \ref{basicsprop}.\ref{jkelem} below.

Recall
from Section~\ref{FreeLDs} that we parenthesize on the left and omit the $\cdot$ symbol for the application operation: $jk\ell$ denotes $(j\cdot k)\cdot\ell$.

\begin{dfn}\label{defncritseq}
    The \emph{critical sequence} $\overrightarrow{\crit} (j)$ of a rank-into-rank embedding $j: V_\lambda \to V_\lambda$ is defined as the sequence $\la \kappa_n|\, n \in \omega \ra$ such that $\kappa_0 = \crit j$ and $\kappa_{n+1} = j(\kappa_n)$ for all $n < \omega$. 
\end{dfn}

For $j \in \mathcal{E}_{\lambda}$, let $f(n)$ be the number of critical points of members of $\mathcal{A}_j$ between $\kappa_n$ and $\kappa_{n+1}$: $$f(n) = |\{\gamma \in \crit \mathcal{A}_j: \kappa_n < \gamma < \kappa_{n+1}\}|.$$ Perhaps surprisingly, $f(n)$ grows very rapidly: Dougherty showed \cite{Dougherty:1993}
that $f(n)$ dominates the Ackermann function, a benchmark function that grows far faster than exponential or hyperexponential. 
There are no critical points of $\mathcal{A}_j$ between $\kappa_0$ and $\kappa_1$ or between $\kappa_1$ and $\kappa_2$ ($f(0) = 0$ and $f(1) = 0$); there is exactly one between $\kappa_2$ and $\kappa_3$, and $f(3)$ is unimaginably large. This fast growing behavior left open the possibility that there were infinitely many members of the critical family $crit \mathcal{A}_j$ between two elements of the critical sequence of $f$. Combined results of Steel and Laver show that does not happen.

\begin{thm}[The Laver-Steel Theorem]
    For all $n < \omega$, $f(n)$ is finite.
        \begin{itemize}
            \item (Steel \cite{Steel:1993}) For any sequence $\la k_n: n < \omega \ra$ with $k_n\in\mathcal{E}_{\lambda}$, 
            if we define $\ell_n = (((k_0k_1)k_2) \cdots k_{n-1})k_n$, then $\sup \{\crit \ell_n: n< \omega\} = \lambda$.
            \item (Laver \cite{Laver:1995}) The order type of $\{\crit k: k \in \mathcal{A}_j\}$ is $\omega$.
        \end{itemize}
\end{thm}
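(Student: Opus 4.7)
The theorem packages three assertions: Steel's unboundedness of $\sup_n \crit \ell_n$, Laver's claim that $C := \{\crit k : k \in \mathcal{A}_j\}$ has order type $\omega$, and the finiteness of $f(n)$. The last follows immediately from Laver's part: using the identity $\crit(j \cdot k) = j(\crit k)$, each $\kappa_n$ lies in $C$ (for instance $\kappa_n = \crit(j\cdot(j\cdot(\cdots\cdot j)))$ with $n+1$ copies right-associated), so the $\kappa_n$ form an unbounded subsequence of the $\omega$-sequence $C$ in $\lambda$, and only finitely many elements of $C$ can lie between consecutive $\kappa_n$ and $\kappa_{n+1}$.

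For Steel's part, the plan is a proof by contradiction. Suppose $\sup_n \crit \ell_n = \mu < \lambda$ and fix $\mu < \alpha < \lambda$. The recursion $\ell_{n+1} = \ell_n \cdot k_{n+1}$ together with $\crit(\ell_n \cdot k_{n+1}) = \ell_n(\crit k_{n+1})$ shows that the critical points of the $\ell_n$ evolve by iterated application, and boundedness forces the restrictions $\ell_n \restriction V_\alpha$ to cohere in a strong way. I would then extract from this coherence an inverse-limit object---an elementary embedding of a rank segment $V_\beta$ for $\beta$ above $\mu$---whose critical point ultimately exceeds $\mu$, either contradicting the assumed boundedness directly or producing an embedding that violates Kunen's theorem.

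For Laver's part, assuming Steel, I would argue by contradiction: if $C$ has order type strictly greater than $\omega$, then $C$ has a limit point $\nu < \lambda$, witnessed by a strictly increasing sequence $h_n \in \mathcal{A}_j$ with $\crit h_n = \mu_n \to \nu$. The aim is to produce a sequence $\langle k_n \rangle$ in $\mathcal{E}_\lambda$ with $\sup_n \crit \ell_n \leq \nu$, contradicting Steel. Using the term structure of $\mathcal{A}_j$ as iterated applications of $j$, one passes to a subsequence or applies further $j$-operations to the $h_n$ so that $\ell_{n-1}$ fixes $\crit k_n$, yielding $\crit \ell_n = \crit k_n \to \nu$. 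The main obstacle is Steel's part, since the inverse-limit analysis required to derive a contradiction from a bounded sup is delicate and demands careful tracking of how elementarity propagates through the left-associated products $\ell_n$.
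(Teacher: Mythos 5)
The paper does not prove this theorem: it is quoted as background, with the two bulleted parts attributed to Steel \cite{Steel:1993} and Laver \cite{Laver:1995} respectively, so there is no in-paper argument to compare against and your proposal has to stand on its own. The one piece that does stand is the deduction of the finiteness of $f(n)$ from Laver's part: the right powers $j^{(n)}$ witness $\kappa_n\in\crit\mathcal{A}_j$, these are cofinal in $\lambda$ by Proposition \ref{basicsprop}(\ref{critscof}), and a cofinal subset of an order-type-$\omega$ set leaves only finitely many points in each gap. That is correct.

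The other two parts have genuine gaps. For Steel's part, the assertion that boundedness of $\sup_n\crit\ell_n$ ``forces the restrictions $\ell_n\restr V_\alpha$ to cohere'' is unsupported: the $\ell_n$ are pairwise distinct embeddings with no extension relation among them, so there is no single function to restrict as in the proof of Proposition \ref{basicsprop}(\ref{critscof}), and no candidate inverse/direct limit presents itself. This is the deep content of the Laver--Steel theorem; the known proofs (Steel's, via well-foundedness of a tower-of-measures order, and Laver's later combinatorial argument) do not reduce to ``build an embedding of some $V_\beta$ and apply Kunen,'' and your sketch gives no mechanism for producing such an embedding. For Laver's part, the proposed device is internally inconsistent: if $\ell_{n-1}$ fixes $\crit k_n$ then necessarily $\crit k_n<\crit\ell_{n-1}$, because any $h\in\Ecal_\lambda$ satisfies $h(\alpha)>\alpha$ for all $\alpha\in[\crit h,\lambda)$ (its critical sequence is cofinal in $\lambda$, and $\kappa_m\leq\alpha<\kappa_{m+1}$ gives $h(\alpha)\geq\kappa_{m+1}$). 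Hence $\crit\ell_n=\crit k_n<\crit\ell_{n-1}$ for every $n$, an infinite descending sequence of ordinals, so the construction cannot produce critical points increasing to $\nu$ and in fact cannot be carried out at all. The correct derivation of Laver's part from Steel's requires a different arrangement of the witnesses, and as written neither of the two substantive bullets is proved.
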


In our inverse limit construction in Section \ref{Main}, the critical sequence of the given embedding $j \in \Emb_{\lambda}$ will play a vital role. We now present some useful observations about the critical sequence of $j$.

\begin{dfn}\label{leftpowers}
    For any natural number $n$, define $j^{(n)}$, the $n^{th}$ \emph{right power} of $j$, recursively on $n$ by $$j^{(0)} = j \text { and } j^{(n+1)} = j^{(n)} j^{(n)}.$$
\end{dfn}

Note that a simple induction proves that $j^{(n+1)}=j j^{(n)}$ for all $n$. It follows immediately that the critical sequence of $j$ is precisely the sequence of critical points of the right powers of $j$:
$$\crit j^{(n)} = \kappa_n.$$
Indeed, a straightforward induction shows that the behaviour of the right powers $j^{(n)}$ on the critical sequence of $j$ is very simple:

\begin{lem}\label{jmsonkappans}
Let $j$ be in $\Ecal_{\lambda}$ with critical sequence $\overrightarrow{\crit}(j) = \la\ka_n\st n\in\omega\ra$.
For all $m$ and $n$ in $\omega$, 
\[
j^{(m)}(\ka_n)=\begin{cases}
\ka_n&\textrm{if }n<m\\
\ka_{n+1}&\textrm{if }m\leq n.
\end{cases}
\]
\end{lem}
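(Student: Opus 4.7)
The plan is to prove the lemma by induction on $m$, using two facts established just before the statement: (i) $\crit j^{(m)} = \ka_m$ for every $m<\omega$, and (ii) for any rank-to-rank embeddings $j,k\in\Ecal_\lambda$ and any $x\in V_\lambda$, the identity $(j\cdot k)(j(x))=j(k(x))$ holds, obtained by applying elementarity of $j$ to the formula ``$k(x)=y$''.

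The base case $m=0$ is immediate, since $j^{(0)}=j$ yields $j^{(0)}(\ka_n)=\ka_{n+1}$ by the very definition of the critical sequence; the clause $n<m$ is vacuous. For the inductive step, assume the claim for $m$ and use $j^{(m+1)}=j\cdot j^{(m)}$. If $n<m+1$, then $\ka_n<\ka_{m+1}=\crit j^{(m+1)}$ by (i), so $j^{(m+1)}(\ka_n)=\ka_n$ automatically. If instead $n\geq m+1$, then in particular $n\geq 1$, so we may write $\ka_n=j(\ka_{n-1})$ and invoke (ii):
$$j^{(m+1)}(\ka_n) \;=\; (j\cdot j^{(m)})(j(\ka_{n-1})) \;=\; j\bigl(j^{(m)}(\ka_{n-1})\bigr).$$
Since $n-1\geq m$, the inductive hypothesis gives $j^{(m)}(\ka_{n-1})=\ka_n$, whence $j^{(m+1)}(\ka_n)=j(\ka_n)=\ka_{n+1}$ as required.

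The only point needing attention is that the rewrite $\ka_n=j(\ka_{n-1})$ requires $n\geq 1$, because $\ka_0$ lies strictly below $\crit j$ and so is not in the range of $j$. Fortunately the case $n=0$ always belongs to the ``$n<m+1$'' branch (as $m+1\geq 1$) and is disposed of by fact (i) alone, without any recourse to the inductive hypothesis. Beyond this trivial bookkeeping split, the argument is a direct unwinding of the recursion $j^{(m+1)}=j\cdot j^{(m)}$, and I do not anticipate any genuine obstacle.
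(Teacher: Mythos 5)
Your proof is correct and follows essentially the same route as the paper's: induction on $m$, the $n<m$ case handled by $\crit j^{(m)}=\ka_m$ together with monotonicity of the critical sequence, and the $n\geq m$ case handled by applying elementarity of $j$ to get $(j\cdot j^{(m)})(j(x))=j(j^{(m)}(x))$. Your explicit remark that the rewrite $\ka_n=j(\ka_{n-1})$ needs $n\geq 1$ (and that $n=0$ always falls in the fixed-point branch) is the same bookkeeping the paper handles via the substitution $n=\ell+1$.
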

\begin{proof}
By definition of the critical sequence, when $m = 0$, $j^{(0)}(\ka_n) = j(\ka_n) = \ka_{n+1}$ for all $n \geq 0$. Recall that the critical point of $j^{(m)}$ is $\ka_m$ and that the critical points are increasing, so $n<m$ implies that $\ka_n < \ka_m$ and thus $\ka_n$, being below the critical point of $j^{(m)}$, is unmoved by $j^{(m)}$.

Now, assuming inductively that for some $m$ we have
$j^{(m)}(\ka_\ell)=\ka_{\ell+1}$ for all $\ell\geq m$,
it follows from the elementarity of $j$ that $j^{(m+1)}(j(\ka_\ell)) = (jj^{(m)})(j(\ka_\ell)) = j(j^{(m)}(\ka_{\ell})) =j(\ka_{\ell+1})=\ka_{\ell+2}$.
Substituting $n=\ell+1$ we have that $j^{(m+1)}(\ka_n)=\ka_{n+1}$ for all $n\geq m+1$, as required.
\end{proof}

The behavior below $\lambda$ of the right powers of $j$ on the critical sequence of $j$ can be visualized in Figure \ref{rankpicture}.  \index{rank-into-rank embeddings, picture}

\setlength{\unitlength}{1cm}
\begin{figure} \label{rankintorank}
\begin{picture}(1,5.5)
\put(1,0){\line(0,1){6}}
\put(.9,1){\line(1,0){.2}}
\put(.5, .9){$\kappa_0$}
\put(.9,5.5){\line(1,0){.2}}
\put(.5,5.4){$\lambda$}

\put(1.4, 5.5){\vector(1,0){2}}
\put(1.4, 1){\vector(3, 1){2}}
\put(1.4, 0.5){\vector(1,0){2}}
\put(2.3, 3.2){$j$}

\put(4,0){\line(0,1){6}}
\put(3.9,2){\line(1,0){.2}}
\put(2.3,1.9){$j(\kappa_0) = \kappa_1$}
\put(3.9,5.5){\line(1,0){.2}}

\put(4.4, 5.5){\vector(1,0){2}}
\put(4.4, 2){\vector(3, 1){2}}
\put(4.4, 1.5){\vector(1,0){2}}
\put(5.3,3.95){$jj$}

\put(7,0){\line(0,1){6}}
\put(6.9,3){\line(1,0){.2}}
\put(5.3,2.9){$j(\kappa_1) = \kappa_2$}
\put(6.9,5.5){\line(1,0){.2}}

\put(7.4, 5.5){\vector(1,0){2}}
\put(7.4, 3){\vector(3, 1){2}}
\put(7.4, 2.5){\vector(1,0){2}}
\put(10.25, 4.5){$\cdots$}

\put(12,0){\line(0,1){6}}
\put(11.9,5.5){\line(1,0){.2}}
\put(12.3,5.4){$\lambda$}
\end{picture}
\caption{Right powers of rank-into-rank embeddings.}
\label{rankpicture}
\
\end{figure}
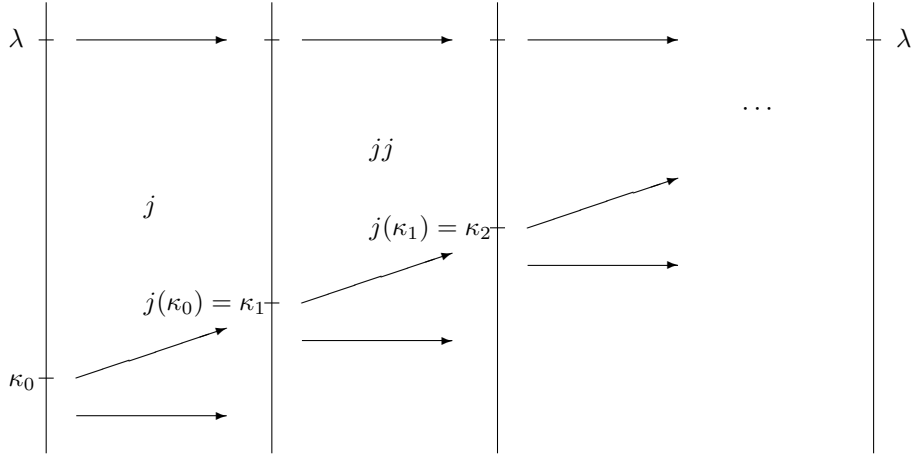

We collect a number of basic properties of rank-to-rank embeddings in the following proposition. 

\begin{prop}\label{basicsprop} Suppose that $j \in \Ecal_{\lambda}$. Then the following hold. \index{rank-into-rank embeddings, properties of}
\begin{enumerate}
\item\label{jaisja} For elements $a$ of $V_\lambda$, $j\cdot a = j(a)$.
\item\label{jkelem} If $k$ is in $\Emb_{\lambda}$, then $jk: V_\lambda \to V_\lambda$ is in $\Emb_{\lambda}$.  In particular, every right power $j^{(n)}$ of $j$ is in $\Emb_{\lambda}$. 
\item\label{critscof} The critical sequence of the right powers of $j$, $\la \crit(j^{(n)}) |\, n < \omega \ra$, is increasing and cofinal in $\lambda$.  \index{critical sequence}
\item\label{coflaom} The cofinality of $\lambda$ is $\omega$.
\end{enumerate}
\end{prop}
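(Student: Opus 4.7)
Part (1) is a direct unfolding of the definition: I would pick $\alpha<\lambda$ with $a\subseteq V_\alpha$, so that $a\cap V_\beta = a$ for all $\beta\geq\alpha$ and hence $j(a\cap V_\beta)=j(a)$; for smaller $\beta$, elementarity preserves $\subseteq$ and gives $j(a\cap V_\beta)\subseteq j(a)$. The union therefore collapses to $j(a)$.

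For part (2) the plan is first to check from the definition that $jk$ is a total function $V_\lambda\to V_\lambda$, and then to establish elementarity via the standard extension of $j$ to a $\Sigma_0$-elementary self-embedding of $V_{\lambda+1}$ (given by the same formula $j(A)=\bigcup_{\alpha<\lambda} j(A\cap V_\alpha)$). The property ``$k$ is a nontrivial elementary embedding $V_\lambda\to V_\lambda$'' is bounded in $V_{\lambda+1}$ with parameters $k$, $V_\lambda$, and the Tarski satisfaction predicate for $V_\lambda$ --- each of which lies in $V_{\lambda+1}$ and is a fixed point of the extension. Applying the extension transfers this property from $k$ to $j(k)=jk$. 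The statement about right powers is then an immediate induction on $n$ using $j^{(0)}=j\in\Ecal_\lambda$ and $j^{(n+1)}=j^{(n)}\cdot j^{(n)}$.

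For part (3), the critical sequence is strictly increasing since $\kappa_n\geq\kappa_0=\crit j$ implies $j(\kappa_n)>\kappa_n$. For cofinality, the plan is to suppose $\beta:=\sup_n\kappa_n<\lambda$ and derive a contradiction with Kunen's Theorem. Since $\cof(\beta)=\omega<\kappa_0$, $j$ is continuous at $\beta$, so $j(\beta)=\sup_n j(\kappa_n)=\sup_n\kappa_{n+1}=\beta$; in particular $\beta+2<\lambda$ is also a fixed point of $j$. Restricting $j$ to $V_{\beta+2}$ --- a set definable from $\beta$ in $V_\lambda$ and sent to itself by $j$ --- yields via elementarity a nontrivial elementary embedding $V_{\beta+2}\to V_{\beta+2}$ with critical point $\kappa_0$, directly contradicting Theorem~\ref{Kunen}. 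Part (4) is then immediate: by (3), $\lambda$ is the supremum of an $\omega$-sequence, so $\cof(\lambda)\leq\omega$; and $\lambda>\kappa_0>\omega$ since critical points of elementary embeddings are measurable, giving $\cof(\lambda)=\omega$.

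The main obstacle is part (2): elementarity of $jk$ does not follow by naively chaining the elementarities of $j$ and $k$, because $jk\neq j\of k$. The argument must be routed through the extended embedding on $V_{\lambda+1}$, and one has to be careful that the ``elementary embedding of $V_\lambda$'' property really is preserved by the $\Sigma_0$-elementary extension --- i.e.\ that the satisfaction predicate used to express it is itself an element of $V_{\lambda+1}$ and is fixed by the extension.
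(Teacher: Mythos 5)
Your treatments of parts (1), (3) and (4) are essentially the paper's own: the union defining $j\cdot a$ stabilises at $j(a)$; a bounded critical sequence would have supremum $\beta$ fixed by $j$ (via continuity at cofinality-$\omega$ ordinals), so that $j\restr V_{\beta+2}$ would violate Kunen's Theorem; and (4) follows at once. One small caution in (3): the assertion ``$\alpha\geq\crit j$ implies $j(\alpha)>\alpha$'' is not a general fact about elementary embeddings (ultrapower embeddings have many fixed points above their critical points); here the clean justification is the induction $\kappa_{n+1}\geq\kappa_n+1\Rightarrow\kappa_{n+2}=j(\kappa_{n+1})\geq j(\kappa_n+1)=\kappa_{n+1}+1$.

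Part (2) is where you genuinely diverge from the paper, and the comparison is instructive. The paper stays entirely inside $V_\lambda$: for each fixed formula $\varphi$ and each $\beta<\lambda$, the $\varphi$-elementarity of the fragment $\ell=k\restr V_\beta$ is a single first-order statement about $V_\lambda$ with parameters $\ell$ and $V_\beta$, namely $\forall\vec a\in V_\beta\,(\varphi[\vec a]\leftrightarrow\varphi[\ell(\vec a)])$; applying the ordinary elementarity of $j$ to this and letting $\beta$ run cofinally gives the $\varphi$-elementarity of $jk$. You instead route through the canonical extension $j^{+}$ to $V_{\lambda+1}$ and the Tarski satisfaction predicate $\mathrm{Sat}$ for $V_\lambda$. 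This can be made to work, but the two facts you lean on---that $j^{+}$ is $\Sigma_0$-elementary on $(V_{\lambda+1},\in)$ (i.e.\ $\Sigma^1_0$-elementary over $V_\lambda$), and that $j^{+}(\mathrm{Sat})=\mathrm{Sat}$---are precisely where all the content lives, and you assert both rather than prove them. The tidiest way to get $j^{+}(\mathrm{Sat})=\mathrm{Sat}$ is to note that $\mathrm{Sat}$ is the unique $S\subseteq V_\lambda$ satisfying the Tarski recursion conditions (a first-order property of $(V_\lambda,\in,S)$, with uniqueness because the formula codes in $V_\omega$ are standard); the alternative is to check formula-by-formula that $j(\mathrm{Sat}\cap V_\alpha)=\mathrm{Sat}\cap V_{j(\alpha)}$, which is the paper's computation in different clothing. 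So the detour is legitimate but buys nothing: filling your flagged gap reduces to the direct argument, which needs only the first-order elementarity of $j$ that is given by hypothesis. There is also a genuine slip in the write-up: you list $k$ among the parameters and say each parameter ``is a fixed point of the extension,'' but $k$ is of course not fixed---$j^{+}(k)=jk$ is the whole point; only $V_\lambda$ and $\mathrm{Sat}$ should be claimed as fixed.
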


\begin{proof} For (\ref{jaisja}) note that because $\lambda$ is a limit, $a \in V_\lambda$ means $a \in V_\beta$ for some $\beta < \lambda$. Therefore, $$j \cdot a  = \bigcup_{\alpha < \lambda} j(a \cap V_{\alpha}) = \bigcup_{\alpha \leq \beta < \lambda} j(a \cap V_{\alpha}) = j(a).$$

To prove (\ref{jkelem}), it suffices to show that, for $j$ and $k$ in $\Ecal_{\lambda}$, $jk$ is in $\Ecal_{\lambda}$ (the rest follows by induction). 
Let $\beta < \lambda$ and let $\varphi(x_1, \ldots, x_n)$ be a formula. We have that $k \rest V_\beta$ is in $V_\lambda$. Denote $k\rest V_\beta$ by $\ell$ for clarity. For each formula in the language of set theory,
the elementarity of the fragment $\ell$ of $k$ for that formula may be expressed as 
$$V_\lambda \models \forall a_1, \ldots, a_n \in V_\beta (\varphi[a_1, \ldots, a_n] \leftrightarrow \varphi[\ell(a_1), \ldots, \ell(a_n)]).$$
Applying $j$ to this statement we have 
$$V_\lambda \models \forall a_1, \ldots, a_n \in V_{j(\beta)} (\varphi[a_1, \ldots, a_n] \leftrightarrow \varphi[j(\ell)(a_1), \ldots, j(\ell)(a_n)]).$$
Since $j(\beta) \ge \beta$, $jk$ restricts to $j(\ell)$, and $\beta < \lambda$ and $\varphi$ were arbitrary, we get that $jk: V_\lambda \to V_\lambda$ is elementary as desired.

To see that (\ref{critscof}) holds, suppose that $\sup \la \crit(j^{(n)})|\, n < \omega \ra = \bar \lambda < \lambda$. It then follows that
$$j(\bar \lambda) = \sup \la \crit(j^{(n+1)})\, n < \omega \ra = \bar \lambda.$$
If this were the case, we would have that $j \rest V_{\bar \lambda+2}: V_{\bar \lambda+2} \to V_{\bar \lambda+2}$ is a nontrivial elementary embedding, contradicting Kunen's theorem.

Part (\ref{coflaom}) then follows from part (\ref{critscof}). 
\end{proof}


We saw in Lemma \ref{jmsonkappans} that each $j^{(m)}$ agrees with $j$ on a tail of 
the sequence $\la\kappa_n\st n\in\omega\ra$. 
A similar argument from the elementarity of $j$ gives the basic fact in Proposition \ref{squareagreement}.
For this proposition 
and hereafter we use the notation that for $j$ in $\Ecal$, the extended range of $j$, $\rnglpo j$, is the range of $j$ when
construed as a function from $V_{\lambda+1}$ to $V_{\lambda+1}$; that is, 
$$\rnglpo j = \{ A \in \vlm |\, \exists B \in \vlm (jB = A)\}.$$
In particular, $\rng j=\rnglpo j \cap V_\lambda$.

\begin{prop} \label{squareagreement} Suppose $j$ is in $\Ecal_{\lambda}$. For every $A$ in $\rnglpo j$, the extended range of $j$, $j$ agrees with its square $jj$ on $A$: $$jA = jjA.$$ \index{$j$, $j[j]$ agreement}
\end{prop}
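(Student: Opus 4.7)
The plan is to promote the desired identity from elements of $V_\lambda$ up to subsets in $V_{\lambda+1}$ by decomposing into rank-initial segments and invoking elementarity of $j\from V_\lambda\to V_\lambda$ on the set-sized approximations $j\restr V_\alpha\in V_\lambda$.

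First I would handle the case where $A$ is a single element of $\rng j\cap V_\lambda$: fix $X\in V_\lambda$, choose $\alpha<\lambda$ with $X\in V_\alpha$, and set $j_\alpha:=j\restr V_\alpha\in V_\lambda$. The trivial identity $j_\alpha(X)=j(X)$ holds in $V_\lambda$, so by elementarity $j(j_\alpha)(j(X))=j(j(X))$. The definition of application, $jj=\bigcup_\alpha j(j\cap V_\alpha)$, yields the standard identity $(jj)\restr V_{j(\alpha)}=j(j_\alpha)$, and hence $(jj)(j(X))=j(j(X))$; this is precisely $jjA=jA$ for $A=j(X)\in\rng j$.

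To lift to an arbitrary $A\in\rnglpo j$, write $A=jB$ for some $B\in V_{\lambda+1}$. For each $\beta<\lambda$ the intersection identity
\[
A\cap V_{j(\beta)}=jB\cap V_{j(\beta)}=j(B\cap V_\beta)
\]
holds, since $j(V_\beta)=V_{j(\beta)}$ and the extended application is truncated cleanly at rank $j(\beta)$ by the corresponding approximating restriction of $j$. Thus each $A\cap V_{j(\beta)}$ lies in $\rng j\cap V_\lambda$, so the previous paragraph gives $j(A\cap V_{j(\beta)})=(jj)(A\cap V_{j(\beta)})$. Since $j$ is cofinal on the ordinals below $\lambda$ by Proposition \ref{basicsprop}(\ref{critscof}), the ordinals $j(\beta)$ are cofinal in $\lambda$, and so
\[
jA=\bigcup_\beta j(A\cap V_{j(\beta)})=\bigcup_\beta (jj)(A\cap V_{j(\beta)})=jjA.
\]

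The main obstacle is the careful bookkeeping with rank-initial segment restrictions, particularly the identity $(jj)\restr V_{j(\alpha)}=j(j_\alpha)$ and the intersection identity above. These essentially express that the extended application on $V_{\lambda+1}$ is compatible with the set-sized approximations $j\restr V_\alpha\in V_\lambda$, but they need care because $jA$ is generally strictly larger than the pointwise image $j[A]$. Alternatively, the whole proposition can be seen as a direct instance of the left distributive law $j\cdot(j\cdot B)=(j\cdot j)\cdot(j\cdot B)$, once this law is justified at the level of application to subsets of $V_\lambda$; in either case the heart of the argument is the same elementarity calculation applied to $j_\alpha\in V_\lambda$.
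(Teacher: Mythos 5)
Your proposal is correct and takes essentially the same route as the paper: both arguments reduce to the identity $A\cap V_{j(\beta)}=(j\restr V_{\beta+1})(\bar A\cap V_\beta)$ and apply elementarity of $j$ to that statement about the set-sized fragment $j\restr V_{\beta+1}$, then conclude by cofinality of the $j(\beta)$ in $\lambda$. The only cosmetic difference is that you factor out the element-level agreement of $j$ and $jj$ on $\rng j\cap V_\lambda$ as a preliminary step, whereas the paper runs the same computation directly on the pieces $A\cap V_{j(\beta)}$.
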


\begin{proof} Suppose $A$ is in $\rng^+ j$. Then there exists $\bar A$ in $\vlm$ is such that $j\bar A = A$. Let $\beta < \lambda$. 
Then $A \cap V_{j(\beta)}=j(\bar A \cap V_\beta)$. Note that we can write this as 
\[
A \cap V_{j(\beta)}
=j \rest V_{\beta+1} ( \bar A \cap V_\beta).
\]
Applying $j$ to this statement we have
\begin{align*}
j(A \cap V_{j(\beta)})&=
j(j\rest V_{\beta +1} (\bar A \cap V_\beta))\\
&= j(j \rest V_{\beta+1})j(\bar A \cap V_\beta) = j(j \rest V_{\beta+1}) (A \cap V_{j(\beta)})\\ 
&= jj\restr V_{j(\be)+1}(A \cap V_{j(\beta)}) = jj(A \cap V_{j(\beta)}).
\end{align*}
Hence $jj(A \cap V_{j(\beta)}) = jA \cap V_{j(j(\beta))}$. Since $j(j(\beta)) \ge j(\beta) \ge \beta$ and $\beta$ was arbitrary, the proposition follows. 
\end{proof}

The main results of this paper require large cardinal axioms from the wider pantheon of very large large cardinals (see, for example, Kanamori \cite{Kanamori:2009} ). These embeddings preserve formulas of higher syntactic complexity. 
Recall first the Levy hierarchy of formulas for first order languages $\mathcal{L}$ extending $\mathcal{L}_{\epsilon} = \{ \epsilon \}$: a formula $\varphi$ is $\Sigma_0$ or (synonymously) $\Pi_0$ 
if it has no unbounded quantifiers;
that is, if $\varphi$ has only bounded quantifiers, of the forms $\exists v \in w$ and $\forall v \in w$. 
A formula is $\Sigma_{n+1}$ if it is of the form $\exists v_1 \ldots v_k \psi$ for a $\Pi_n$ formula $\psi$, and a formula is $\Pi_{n+1}$ if it is of the form $\forall v_1 \ldots \forall v_k \varphi$ for a $\Sigma_n$ formula $\varphi$.
In the first order setting, variables (and quantifications) are taken to range over \emph{elements} of the structure in question.
However, we will also need to work with second order variables, which we shall use capital letters $A$, $B$, $X$, $Y$, and so forth, 
denoting \emph{subsets} of the structures in question.  
A formula is said to be $\Sigma^1_0$ or (synonymously) $\Pi^1_0$ if it has no quantifications of second order variables.
A formula is $\Sigma^1_{n+1}$ if it is of the form $\exists X_1\ldots\exists X_k\psi$ where $\psi$ is $\Pi^1_n$, and it is
$\Pi^1_{n+1}$ if it is of the form $\forall X_1\ldots\forall X_k\varphi$ where $\varphi$ is $\Sigma^1_n$.

Say that an elementary embedding $j$ from $\mathcal{M}$ to $\mathcal{N}$ is \emph{$\Sigma^1_n$-elementary} if and only if 
for any $\Sigma^1_n$ formula $\varphi$,
$$\mathcal{M} \models \varphi[x_1, \ldots x_k] \text{ if and only if } \mathcal{N} \models \varphi[jx_1, \ldots jx_k].$$
Following Laver \cite{Laver:1997}, we shall also abbreviate this as simply being ``$\Sigma^1_n$,'' even though in other settings
that would normally be used to refer to definition complexity --- we shall never need to refer to definition complexity of 
embeddings outside of the present sentence.

Roughly speaking, embeddings that preserve more complex formulas are stronger embeddings. The axioms below are organized in increasing consistency strength (with more to say about I2 below).

\begin{description}
\item[Axiom I3:] There exists a nontrivial elementary embedding $j: V_\lambda \to V_\lambda$:\\ $\Emb_{\lambda} \neq \emptyset$.
\item[Axiom I2:] There exists a nontrivial elementary embedding $j: V \to M$ for some transitive class $M$ such that the supremum of the critical sequence of $j$, $cr^{\omega}(j) = \sup \overrightarrow{cr} j$, is $\lambda$ and $V_{\lambda} \subseteq M$.
\item[$\Sigma_n^1$-elementary rank-to-rank] There exists a nontrivial $\Sigma_n^1$-elementary embedding $j: V_\lambda \to V_\lambda$.
\item[Axiom I1:] There exists a nontrivial elementary embedding $j: \vlm \to \vlm$. 
\item[Axiom I0:] There exists a nontrivial elementary embedding $j: L(\vlm) \to L(\vlm)$.
\end{description}

We will not further discuss embeddings at the level of I0 or above in this paper. 

The following theorem combines results of Gaifman, Martin, and Powell. Further equivalents can be found in Laver \cite[Theorem 1.3]{Laver:1997}.
\begin{thm} Suppose $j \in \Emb_{\lambda}$. The following are equivalent:
\begin{enumerate}
\item $j$ is $\Sigma_1^1$.
\item  $j$ extends to an elementary embedding $j:V \to M$, $M$ a transitive class with $\vlm \subset M$.
\item $j$ is $\Sigma^1_2$.
\end{enumerate}
In particular, Axiom I2 holds if and only if there is a nontrivial $\Sigma^1_1$-elementary embedding $j\from V_\lambda\to V_\lambda$.
\end{thm}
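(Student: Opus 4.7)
I would prove the cycle $(3) \Rightarrow (1) \Rightarrow (2) \Rightarrow (3)$. The first implication is immediate since a $\Sigma^1_1$ formula is also $\Sigma^1_2$ (by prepending a vacuous universal second-order quantifier), so $\Sigma^1_2$-elementarity trivially implies $\Sigma^1_1$-elementarity.

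For $(1) \Rightarrow (2)$, which is the main technical content, my plan is first to lift $j$ to its natural extension $\hat\jmath : V_{\lambda+1} \to V_{\lambda+1}$ defined by $\hat\jmath(A) = \bigcup_{\alpha<\lambda} j(A \cap V_\alpha)$ as in the excerpt; $\Sigma^1_1$-elementarity is precisely the condition that makes this extension well-behaved with respect to $V_{\lambda+1}$. I would then use $\hat\jmath$ together with the critical sequence $\langle\kappa_n\rangle_{n<\omega}$ to derive a $\lambda$-extender $E = \langle E_a : a \in [\lambda]^{<\omega}\rangle$, declaring $X \in E_a$ iff the corresponding tuple of critical points lies in $\hat\jmath(X)$. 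Form $M := \Ult(V, E)$ with ultrapower embedding $\tilde\jmath : V \to M$. The essential step is establishing well-foundedness of $M$: a hypothetical countable descending $\in$-chain in $\Ult(V, E)$ is coded by a sequence of subsets of $V_\lambda$, and $\Sigma^1_1$-elementarity supplies the countable completeness of $E$ needed to realize this descent honestly in $V$, contradicting well-foundedness in $V$. Mostowski-collapsing then delivers the transitive $M$, and $\tilde\jmath$ extending $\hat\jmath$ with $V_{\lambda+1} \subseteq M$ are routine consequences of the construction.

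For $(2) \Rightarrow (3)$, assume $\tilde\jmath : V \to M$ extends $j$ with $V_{\lambda+1} \subseteq M$, and consider a $\Sigma^1_2$-formula $\Phi(\vec a) \equiv \exists X\, \forall Y\, \varphi(X, Y, \vec a)$ over $V_\lambda$ with $\vec a \in V_\lambda$. If $V_\lambda \models \Phi(\vec a)$, fix a witness $X_0 \in V_{\lambda+1}$; the assertion ``for every $Y \in V_{\lambda+1}$, $V_\lambda \models \varphi(X_0, Y, \vec a)$'' is first-order in $V$ with parameters $X_0$, $\vec a$, and $V_\lambda$, using absoluteness of the satisfaction relation for the fixed first-order $\varphi$. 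Applying $\tilde\jmath$ produces the analogous statement in $M$ about $\tilde\jmath(X_0)$, $\tilde\jmath(\vec a) = j(\vec a)$, and $V_{\tilde\jmath(\lambda)}^M$. The candidate witness $X_1 := \tilde\jmath(X_0) \cap V_\lambda = \hat\jmath(X_0) \in V_{\lambda+1}$ then verifies $\Phi(j(\vec a))$ in $V_\lambda$ via an absoluteness argument leveraging the inclusions $V_\lambda = V_\lambda^M$ and $V_{\lambda+1} \subseteq V_{\tilde\jmath(\lambda)+1}^M$ together with the agreement of the membership predicate on $V_\lambda$ between $\tilde\jmath(X_0)$ and $X_1$. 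The reverse implication is symmetric.

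The principal obstacle is the well-foundedness argument in $(1) \Rightarrow (2)$: without $\Sigma^1_1$-elementarity the derived extender ultrapower can easily be ill-founded, and converting $\Sigma^1_1$-elementarity into countable completeness of $E$ is the real content of the theorem. The ``in particular'' clause on I2 then follows: $(2)$ directly supplies an I2 witness (since $V_{\lambda+1} \subseteq M$ implies $V_\lambda \subseteq M$), while conversely any I2 witness $\tilde\jmath : V \to M$ restricts to a rank-to-rank embedding whose $\Sigma^1_1$-elementarity is obtained by an absoluteness argument parallel to $(2) \Rightarrow (3)$, requiring only the weaker closure $V_\lambda \subseteq M$.
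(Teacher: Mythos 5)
First, a framing point: the paper does not prove this theorem at all --- it is quoted from Gaifman, Martin and Powell, with a pointer to Laver's Theorem~1.3 --- so there is no in-paper argument to compare against. Judged on its own terms, your proposal has the right skeleton for $(1)\Rightarrow(2)$: deriving a $\lambda$-extender from the lift $\hat\jmath$ of $j$ to $V_{\lambda+1}$ and using $\Sigma^1_1$-elementarity to obtain countable completeness, hence well-foundedness of the ultrapower, is exactly the standard Gaifman--Powell argument (your description of $E_a$ should use arbitrary generators $a\in[\lambda]^{<\omega}$ rather than only tuples of critical points, but that is cosmetic). Two of your other steps, however, are genuinely broken.

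You assert that $V_{\lambda+1}\subseteq M$ is a ``routine consequence of the construction.'' It is not; it is provably impossible. Since $\tilde\jmath$ restricts to $j$ on $V_\lambda$, the supremum of its critical sequence is $\lambda$, and Kunen's theorem (Theorem~\ref{Kunen} of this paper, in the form $j``\lambda\notin M$, where $j``\lambda$ is an element of $V_{\lambda+1}$) rules out $V_{\lambda+1}\subseteq M$ for any such extension. What the extender ultrapower actually yields, and what item (2) should say (the ``$V_{\lambda+1}$'' there is evidently a slip for ``$V_\lambda$''; compare the statement of Axiom I2 earlier in the same section), is $V_\lambda\subseteq M$. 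Relatedly, your $(2)\Rightarrow(3)$ transfer does not go through: after applying $\tilde\jmath$ to the first-order statement coding $\exists X\,\forall Y\,\varphi$, the universal second-order quantifier ranges only over $(V_{\lambda+1})^M$, which is a \emph{proper} subset of $V_{\lambda+1}$ by the above, so you cannot conclude the statement for all $Y\in V_{\lambda+1}$ in $V$; the same obstruction blocks the downward direction of the existential quantifier in your $\Sigma^1_1$ argument for the ``in particular'' clause. The standard repair is to prove only $(2)\Rightarrow(1)$ directly, replacing pointwise absoluteness by the tree representation of $\Sigma^1_1$ truth over $V_\lambda$ --- the relevant trees are definable over $V_\lambda=V_\lambda^M$, so their well-foundedness is absolute between $V$ and $M$ --- and then to obtain $(1)\Rightarrow(3)$ from Martin's theorem (Theorem~\ref{uponefree}), which is a substantive result and not a formal quantifier manipulation.
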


In \cite{Laver:1997} Laver proves a number of theorems about preservation of $\Sigma^1_n$-elementarity under inverse limits and square roots. We include several of them in the next section and one here.

\begin{thm}[Laver, \cite{Laver:1997}]\label{hkandhcompk}
 If $h$ and $k$ in $\Ecal_{\lambda}$ are $\Sigma_n^1$, then $h \circ k$ and $hk$ are also $\Sigma^1_n$-elementary embeddings in $\Ecal_{\lambda}$.
\end{thm}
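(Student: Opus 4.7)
The plan is to handle composition and application separately. For $h \circ k$, the argument is direct: one first verifies $(h \circ k) \cdot A = h \cdot (k \cdot A)$ for $A \in V_{\lambda+1}$ via the identity $(k \cdot A) \cap V_{k(\beta)} = k(A \cap V_\beta)$ together with the cofinality of the range of $k$ in $\lambda$. Then $\Sigma_n^1$-elementarity of $h \circ k$ follows by chaining the given $\Sigma_n^1$-elementarities: for any $\Sigma_n^1$-formula $\varphi$ and parameters $\vec x, \vec A$,
\[
V_\lambda \sat \varphi[\vec x, \vec A] \Leftrightarrow V_\lambda \sat \varphi[k\vec x, k \cdot \vec A] \Leftrightarrow V_\lambda \sat \varphi[h(k\vec x), h \cdot (k \cdot \vec A)] = V_\lambda \sat \varphi[(h \circ k)\vec x, (h \circ k) \cdot \vec A].
\]

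For application $hk$, the core input is the braid identity $(hk) \circ h = h \circ k$, which yields both $hk(h(x)) = h(k(x))$ on $V_\lambda$ and $(hk) \cdot (h \cdot A) = h \cdot (k \cdot A)$ on $V_{\lambda+1}$. Combined with the $\Sigma_n^1$-elementarities of $h$ and $k$, this immediately establishes that $hk$ preserves $\varphi$ on parameters of the form $(h\vec x, h \cdot \vec A)$, i.e.\ on $\rng h \times \rnglpo h$, via the chain
\[
V_\lambda \sat \varphi[h\vec x, h \cdot \vec A] \Leftrightarrow V_\lambda \sat \varphi[\vec x, \vec A] \Leftrightarrow V_\lambda \sat \varphi[k\vec x, k \cdot \vec A] \Leftrightarrow V_\lambda \sat \varphi[h(k\vec x), h \cdot (k \cdot \vec A)] = V_\lambda \sat \varphi[hk(h\vec x), hk \cdot (h \cdot \vec A)].
\]

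The main obstacle is then to lift this preservation from $\rng h \times \rnglpo h$ to all of $V_\lambda \times V_{\lambda+1}$. Mirroring the local-to-global argument in Proposition~\ref{basicsprop}(\ref{jkelem}), one would like to express the local $\Sigma_n^1$-preservation property of $k \restr V_\beta$ for each $\beta < \lambda$ as a statement over $V_\lambda$ with $k \restr V_\beta$ as a first-order parameter, apply $h$'s $\Sigma_n^1$-elementarity to transfer it to a statement about $(hk) \restr V_{h(\beta)} = h(k \restr V_\beta)$, and then take a union over $\beta$. The subtle point is that while each individual instance of the biconditional $\varphi(\vec x, \vec A) \leftrightarrow \varphi(k\vec x, k \cdot \vec A)$ is in $\Sigma_n^1 \vee \Pi_n^1$, a class preserved by $\Sigma_n^1$-elementary maps, the full universally-quantified statement is of higher complexity; navigating this complexity tracking so that $h$'s $\Sigma_n^1$-elementarity suffices is precisely the content of Laver's analysis of these operations in \cite{Laver:1997}, to which the remaining details would be deferred.
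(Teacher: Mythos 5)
This theorem is quoted from Laver \cite{Laver:1997} and the paper gives no proof of it, so there is nothing internal to compare against; your proposal has to stand on its own, and as written it does not. The composition half is fine: elementarity (at any level of the projective hierarchy over $V_\lambda$) is closed under composition once one checks $(h\of k)\cdot A=h\cdot(k\cdot A)$, and your verification of that identity is the right one. The application half, however, only proves a strictly weaker statement. The braid law $(hk)\of h=h\of k$ does give, exactly as you compute, that $hk$ preserves $\Sigma^1_n$ formulas whose parameters lie in $\rng h$ and $\rnglpo h$. But the theorem asserts preservation for \emph{arbitrary} parameters in $V_\lambda$ and $V_{\lambda+1}$, and that extension is the entire content of the result; deferring it to ``Laver's analysis'' means the proof is not there.

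To see why the gap is real and not just bookkeeping: the natural way to globalize is to regard ``$k$ preserves the $\Sigma^1_n$ formula $\varphi$'' as a property $P_\varphi(k)$ of $k$ viewed as a second-order parameter (a subset of $V_\lambda$) and push it through $h$ to conclude $P_\varphi(h\cdot k)$, i.e. $P_\varphi(hk)$. Since $k\cdot A$ is $\Delta_0$-definable from $k$ and $A$ over $V_\lambda$, each $P_\varphi(k)$ is of the form $\forall\vec x\,\forall\vec A\,[\Sigma^1_n\leftrightarrow\Sigma^1_n]$, hence $\Pi^1_{n+1}$ in $k$ --- and a merely $\Sigma^1_n$-elementary $h$ transfers $\Pi^1_{n+1}$ statements \emph{downward} but not obviously upward, which is precisely the direction needed. (The local-to-global argument of Proposition \ref{basicsprop}.\ref{jkelem} does not rescue this: the restrictions $k\restr V_\beta$ control first-order satisfaction but not second-order quantification over all of $V_{\lambda+1}$.) For odd $n$, and in particular for the $n=1$ case that this paper actually uses, you could close the gap yourself by invoking Theorem \ref{uponefree}: $h$ is then $\Sigma^1_{n+1}$, hence preserves each $\Pi^1_{n+1}$ statement $P_\varphi$ as a biconditional, and $hk=h\cdot k$ is $\Sigma^1_n$-elementary. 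For even $n$ that shortcut is unavailable and one genuinely needs Laver's finer argument, so the general statement remains unproved in your write-up.
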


We also note here a result of Martin, as cited by Laver.
\begin{thm}[{Martin, see \cite[Theorem 2.3]{Laver:1997}}]\label{uponefree}
    If $n$ is odd and $j$ in $\Ecal_{\lambda}$ is $\Sigma^1_n$, then $j$ is $\Sigma^1_{n+1}$.
\end{thm}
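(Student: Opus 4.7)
My plan is to establish $\Sigma^1_{n+1}$-elementarity of $j$ by proving preservation of $\Sigma^1_{n+1}$ formulas in both directions: an easy upward direction that uses only $\Sigma^1_n$-elementarity, and a harder downward direction that crucially exploits the odd parity of $n$. Fix a $\Sigma^1_{n+1}$ formula $\varphi(\bar x) = \exists X\,\psi(X, \bar x)$ with $\psi \in \Pi^1_n$, and let $j^+ \colon V_{\lambda+1} \to V_{\lambda+1}$ be the canonical extension of $j$ given by $j^+(X) = \bigcup_{\alpha < \lambda} j(X \cap V_\alpha)$.

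The upward direction ($V_\lambda \sat \varphi[\bar x] \Rightarrow V_\lambda \sat \varphi[j\bar x]$) is immediate: a witness $A \in V_{\lambda+1}$ for $\varphi[\bar x]$ satisfies $\psi[A, \bar x]$, and two-sided $\Pi^1_n$-preservation of $j$ (with second-order parameters translated via $j^+$) yields $\psi[j^+A, j\bar x]$, so $j^+A$ witnesses $\varphi[j\bar x]$. The downward direction is where Martin's real work lies: given $B \in V_{\lambda+1}$ witnessing $\varphi[j\bar x]$, produce $A \in V_{\lambda+1}$ witnessing $\varphi[\bar x]$. The naive pullback of $B$ along $j^+$ fails because $B$ need not belong to $\rnglpo j^+$. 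Instead, I would exploit the odd parity of $n$ to fold the leading ``$\exists X$'' inside the $\Sigma^1_n$-boundary via a tree-coding trick: writing $\psi = \forall Y_1 \exists Y_2 \cdots \forall Y_n\,\theta$ in prenex form (the last quantifier being universal because $n$ is odd), one codes ``$\exists X\,\psi(X, \bar x)$'' as the well-foundedness of a tree $T_{\bar x}$ of approximating attempts to build $X$ together with Skolem functions for the embedded existential quantifiers of $\psi$. For odd $n$, this tree is $\Sigma^1_n$-definable from $\bar x$, so $\Sigma^1_n$-elementarity of $j$ transfers its well-foundedness statement between $\bar x$ and $j\bar x$; an infinite branch of $T_{\bar x}$ then assembles into the desired $A$, using $\cof(\lambda) = \omega$ to piece $A$ together from its restrictions $A \cap V_{\kappa_n}$.

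The main obstacle will be arranging the tree representation so that it has precisely the right complexity to be preserved by $\Sigma^1_n$-elementarity while still genuinely capturing the existence of a second-order witness. This is exactly where the odd parity is essential: for even $n$ the analogous tree encoding of $\exists X\,\psi$ sits at the $\Sigma^1_{n+1}$-level rather than $\Sigma^1_n$, so $\Sigma^1_n$-elementarity would fail to transfer well-foundedness and no free jump to $\Sigma^1_{n+2}$ would be available. I expect the technical heart of the argument to be verifying that a branch of the pulled-back tree really does code a valid witness $A \in V_{\lambda+1}$ back in the domain — in particular, that the Skolem-function components of the branch can be threaded through the alternating quantifiers of $\psi$ to certify $\psi[A, \bar x]$ outright, rather than merely an approximation to it.
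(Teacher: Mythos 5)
The paper does not actually prove this statement: it is quoted as Martin's theorem with a pointer to \cite[Theorem 2.3]{Laver:1997}, so there is no in-paper argument to compare yours against, and your proposal has to stand on its own. The upward half of your argument is fine: two-sided $\Sigma^1_n$-preservation gives two-sided $\Pi^1_n$-preservation (with second-order parameters moved by $j^+$), so a witness $A$ pushes forward to the witness $j^+A$. The problem is the downward half, where all of the content of the theorem lives.

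Your central claim is that $\exists X\,\psi(X,\bar x)$, with $\psi\in\Pi^1_n$, can be coded as the (well-)foundedness of a tree $T_{\bar x}$ of approximations to $X$ together with Skolem functions for the existential quantifiers of $\psi$, and that ``for odd $n$ this tree is $\Sigma^1_n$-definable from $\bar x$, so $\Sigma^1_n$-elementarity transfers its well-foundedness.'' First, there is an internal inconsistency: a formula of the form $\exists X\,\psi$ is coded by \emph{ill}-foundedness (existence of a branch), not well-foundedness, and indeed you later extract $A$ from an infinite branch of $T_{\bar x}$; a well-founded tree has no infinite branch. More seriously, the existence of such a tree representation at the right complexity is not a routine observation---it is essentially the theorem itself. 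The Skolem functions for second-order existential quantifiers inside $\psi$ are maps from $V_{\lambda+1}$ to $V_{\lambda+1}$, i.e.\ third-order objects, and packaging them into a tree on $V_\lambda$ whose ill-foundedness has complexity low enough to be moved by a merely $\Sigma^1_n$-elementary $j$ is precisely the hard step (in the base case $n=1$ it is handled via the Gaifman--Martin--Powell equivalence with an extension $j\colon V\to M$ and a Shoenfield-style absoluteness argument between $V_{\lambda+1}$ and $V_{\lambda+1}^{M}$; for higher odd $n$ one needs an inductive tree-representation machinery). You assert the key complexity bound without computing it---``the tree is $\Sigma^1_n$-definable, hence its (ill-)foundedness transfers'' does not follow on its face, since ill-foundedness of a $\Sigma^1_n$-defined object is not obviously a $\Sigma^1_n$ statement---and your closing paragraph explicitly defers ``arranging the tree representation so that it has precisely the right complexity'' as the main obstacle. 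That obstacle is the theorem. Since the parity hypothesis cannot be dropped (the even-to-odd steps of the $\Sigma^1_n$-elementarity hierarchy involve a genuine increase in strength), any correct proof must make the use of oddness load-bearing, and in your sketch it is currently a placeholder rather than an argument.
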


\section{Technical Tools: Inverse Limits and Square Roots}\label{Tools}

In this section we introduce two technical tools used in our construction (see Section \ref{Main}) of embeddings $K$ and $L$ that are suitably independent of one another.  Neither $K$ nor $L$ will lie in the algebra $\Acal_{j}$ generated by our starting rank-to-rank embedding $j$. 
Rather, the embeddings we construct will be inverse limits, defined
following Laver \cite[Section 3]{Laver:1997}.

\begin{dfn}
Suppose we have a sequence $\vec{k}=\la k_n|\, n < \omega \ra$ of embeddings in $\Emb_{\lambda}$. 
Define 
$\bar \lambda_{\vec{k}}$ to be the cardinal $\liminf_{n< \omega} \crit k_n.$
Then for every $a$ in $V_{\bar \lambda_{\vec{k}}}$, set
$$K(a) = (k_0 \circ k_1 \circ \cdots \circ k_n) (a),$$
where $n$ is large enough that $k_m(a) = a$ holds for every $m>n$. 
We call the function
$K = k_0 \circ k_1 \circ \cdots$ so defined the \emph{inverse limit} of $\vec{k}$.  
\end{dfn}

As Laver \cite[Section 3]{Laver:1997} notes, there are two possibilities. Either the $\liminf$ of the critical points $\bar\lambda_{\vec{k}}$ is attained, 
in which case the inverse limit is
essentially just a finite composite, or it is the cofinality $\omega$ supremum of an increasing sequence of the critical
points.  In the latter case, by taking finite composites of blocks of the embeddings $k_i$
if need be, we may always assume that the
sequence of critical points is strictly increasing. These latter inverse embeddings are called \emph{nontrivial inverse limits}, and we will assume that all inverse limits in this paper are nontrivial with $\bar\lambda_{\vec{k}} = \lambda$.

\begin{prop}[Laver {\cite[Lemma 3.1 and Theorem 3.3]{Laver:1997}}] \label{invlimelem} Let $m$ be a natural number.  Suppose $\vec{k}= \la k_n|\, n < \omega \ra$ is a sequence of $\Sigma^1_m$-elementary embeddings from $V_\lambda$ to $V_\lambda$, and let \mbox{$K = k_0 \circ k_1 \circ \cdots$} be the inverse limit of $\vec{k}$. Then $K$ is an a $\Sigma^1_m$-elementary embedding from $V_{\bar \lambda_{\vec{k}}}$ to $V_\lambda$. In particular, if $\bar \lambda_{\vec{k}} = \lambda$ then $K$ is a $\Sigma^1_m$ rank-into-rank embedding from $V_\lambda$ to $V_\lambda$. 
\end{prop}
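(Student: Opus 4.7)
The plan is to express the inverse limit $K$ as a pointwise stabilization of the finite composites $K_N := k_0 \circ k_1 \circ \cdots \circ k_N$ and transfer $\Sigma^1_m$-elementarity from these composites using Theorem~\ref{hkandhcompk}.

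First I would verify that $K$ is well-defined on $V_{\bar\lambda_{\vec k}}$. For any $a \in V_{\bar\lambda_{\vec k}}$, choose $\beta < \bar\lambda_{\vec k}$ with $a \in V_\beta$. By the remarks immediately preceding the proposition we may assume that the critical points $\crit k_n$ form a strictly increasing $\omega$-sequence with supremum $\bar\lambda_{\vec k}$; hence there is some $N$ with $\crit k_n > \beta$ for all $n > N$, so that $k_n(a) = a$ for such $n$. Consequently $(k_0 \circ \cdots \circ k_M)(a)$ is the same for every $M \geq N$, so $K(a)$ is well-defined and equals $K_N(a)$.

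For $\Sigma^1_m$-elementarity, iterating Theorem~\ref{hkandhcompk} shows that each $K_N$ is itself a $\Sigma^1_m$-elementary embedding $V_\lambda \to V_\lambda$. Because $\Sigma^1_m$-elementary embeddings restrict to $\Sigma^1_m$-elementary embeddings between rank-initial segments, $K_N \restr V_{\bar\lambda_{\vec k}} : V_{\bar\lambda_{\vec k}} \to V_{K_N(\bar\lambda_{\vec k})}$ is $\Sigma^1_m$-elementary. Given a $\Sigma^1_m$ formula $\varphi$ and parameters $\bar a$ (together with second-order parameters $\bar A$ when $m \geq 1$) from $V_{\bar\lambda_{\vec k}}$, I would choose $N$ large enough that $K_N(\bar a) = K(\bar a)$ and that $K_N(\bar A)$ agrees with $K(\bar A)$ on a sufficiently large rank-initial segment; then $V_{\bar\lambda_{\vec k}} \models \varphi[\bar a,\bar A] \iff V_{K_N(\bar\lambda_{\vec k})} \models \varphi[K\bar a, K\bar A]$. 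In the case of primary interest, $\bar\lambda_{\vec k} = \lambda$ (the only case used in this paper), $K_N(\bar\lambda_{\vec k}) = \lambda$, so this is precisely the equivalence required, giving $K$ as a $\Sigma^1_m$-elementary embedding $V_\lambda \to V_\lambda$.

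The main obstacle is handling second-order parameters cleanly. While elements $a \in V_{\bar\lambda_{\vec k}}$ stabilize so that $K(a) = K_N(a)$ for large $N$, a subset $A \subseteq V_{\bar\lambda_{\vec k}}$ does not in general satisfy $K(A) = K_N(A)$: the extension $K(A) = \bigcup_\alpha K(A \cap V_\alpha)$ agrees with $K_N(A)$ only level-by-level in a cofinal way, so one must verify that the two subsets are indistinguishable to the given $\Sigma^1_m$ formula, which can be done by pushing the agreement up to any fixed level of detail via increasing $N$. For the general case $\bar\lambda_{\vec k} < \lambda$, an additional reflection step is also needed to connect truth in $V_{K_N(\bar\lambda_{\vec k})}$ with truth in $V_\lambda$, but when $\bar\lambda_{\vec k} = \lambda$ this last gap vanishes and the argument above goes through directly.
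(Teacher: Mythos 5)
The paper offers no proof of this proposition---it is quoted directly from Laver (\cite[Lemma 3.1 and Theorem 3.3]{Laver:1997})---so there is no in-paper argument to compare against; your proposal must stand on its own. The well-definedness of $K$ and the purely first-order, set-parameter case (when $\bar\lambda_{\vec k}=\lambda$) are fine: there $K(\bar a)=K_N(\bar a)$ for large $N$, and elementarity of the finite composite $K_N$ (via Theorem~\ref{hkandhcompk}) transfers verbatim.

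However, the step you yourself flag as ``the main obstacle'' is a genuine gap, and it is exactly where the content of Laver's theorem lies. For a second-order parameter $A\subseteq V_{\bar\lambda_{\vec k}}$, the sets $K(A)$ and $K_N(A)$ are genuinely different subsets of $V_\lambda$ for \emph{every} $N$; they agree only on rank-initial segments $V_{\gamma_N}$ with $\gamma_N\to\lambda$. The truth value in $V_\lambda$ of a formula containing an unbounded quantifier---let alone a second-order quantifier---at the parameter $A$ is not determined by any proper initial segment of $A$ (consider $\forall x\,(x\in A\leftrightarrow\phi(x))$), so ``pushing the agreement up to any fixed level of detail via increasing $N$'' cannot show that $\varphi[K_N\bar a,K_N\bar A]$ and $\varphi[K_N\bar a,K\bar A]$ have the same truth value. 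As written, your argument reduces the proposition to the assertion that $K(A)$ and $K_N(A)$ are indistinguishable to the given formula, which is essentially the statement to be proved. A correct proof needs a different mechanism: for the first-order case one can use the direct-limit decomposition of the inverse limit (Figure~\ref{invlimpicture}), realizing $V_\lambda$ as a direct limit of copies of $V_\lambda$ along the embeddings $\tilde k_i$, under which elementarity is preserved; the $\Sigma^1_m$ case is then handled by induction on $m$ using tree representations of $\Sigma^1_1$ statements about $V_\lambda$ (the same device that drives Lemma~\ref{sqrrtlem}). The stabilization-of-finite-composites argument alone does not suffice for $m\geq 1$, nor does it address the passage from truth in $V_{K_N(\bar\lambda_{\vec k})}$ to truth in $V_\lambda$ in the general case $\bar\lambda_{\vec k}<\lambda$, which you also leave unproved.
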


We note, still following Laver \cite{Laver:1997}, that one can think of inverse limits as a special case of direct limits of a specific form. In particular, suppose for $\la j_i: i < \omega \ra$ (each $j_i \in \Ecal_\lambda$ with $\liminf_{n< \omega} \crit j_i = \bar{\lambda}$), our inverse limit is $J = j_0 \circ j_1 \circ j_2 \circ \cdots$ and $J: V_{\bar{\lambda}} \to V_\lambda$. 
Then one can define the direct limit $\tilde{J} = \cdots \circ \tilde{j}_2 \circ \tilde{j}_1 \circ \tilde {j}_0$, where for each $i$, $\tilde{j}_i = j_0(j_1(j_2( \cdots (j_{i-1}j_i))))$, and $J = \tilde{J} \rest V_{\bar{\lambda}}$,
by repeated use of the identities $k_1\of k_2=k_1k_2\of k_1$ and $(k_1\of k_2)k_3=k_1(k_2k_3)$. 
See Figure \ref{invlimpicture}.

\begin{figure}
\setlength{\unitlength}{1cm}
\begin{picture}(1,5.5)
\put(1,0){\line(0,1){9}}
\put(.9,1){\line(1,0){.2}}
\put(-.2, .9){$\text{crit}(j_0)$}
\put(.9,1.5){\line(1,0){.2}}
\put(-.2,1.4){$\text{crit}(j_1)$}
\put(.9,2){\line(1,0){.2}}
\put(-.2,1.9){$\text{crit}(j_2)$}
\put(.6, 2.5){$\vdots$}
\put(.9, 3.2){\line(1,0){.2}}
\put(.5, 3.1){$\bar \lambda$}
\put(.9,8.5){\line(1,0){.2}}
\put(.5,8.4){$\lambda$}

\put(1.4, 8.5){\vector(1,0){2}}
\put(1.4, 1.1){\vector(3, 2){2}}
\put(1.4, 3.6){\vector(3,2){2}}
\put(2.3, 1.2){$j_0$}

\put(4,0){\line(0,1){9}}
\put(3.9,3){\line(1,0){.2}}
\put(2.2, 2.9){$\text{crit}(j_0(j_0))$}
\put(3.9,3.5){\line(1,0){.2}}
\put(2.2,3.4){$\text{crit}(j_0(j_1))$}
\put(3.9,4){\line(1,0){.2}}
\put(2.2,3.9){$\text{crit}(j_0(j_2))$}
\put(3.6, 4.5){$\vdots$}
\put(3.9, 5.4){\line(1,0){.2}}
\put(2.9, 5.3){$j_0(\bar \lambda)$}
\put(3.9,8.5){\line(1,0){.2}}

\put(4.4, 8.5){\vector(1,0){3}}
\put(4.4, 3.5){\vector(2, 1){3}}
\put(4.4, 5.6){\vector(2,1){2.8}}
\put(4.4, 3){\vector(1,0){3}}
\put(5.8,3.5){$j_0(j_1)$}

\put(8,0){\line(0,1){9}}
\put(7.9,5.5){\line(1,0){.2}}
\put(5.5,5.4){$\text{crit}(j_0(j_1(j_1)))$}
\put(7.9,6){\line(1,0){.2}}
\put(5.5,5.9){$\text{crit}(j_0(j_1(j_2)))$}
\put(7.6, 6.5){$\vdots$}
\put(7.9, 7.2){\line(1,0){.2}}
\put(6.3, 7.1){$j_0(j_1(\bar \lambda))$}
\put(7.9,8.5){\line(1,0){.2}}

\put(8.4, 8.5){\vector(1,0){1}}
\put(8.4, 6){\vector(2, 1){1}}
\put(8.4, 7.2){\vector(2,1){1}}
\put(8.4, 5.5){\vector(1,0){1}}
\put(8.5,5){$j_0(j_1(j_2))$}

\put(10.25, 7){$\cdots$}

\put(12,0){\line(0,1){9}}
\put(11.9,8.5){\line(1,0){.2}}
\put(12.3,8.4){$\lambda$}
\end{picture}
\caption{Direct limit decomposition of an inverse limit.}
\label{invlimpicture}
\end{figure}
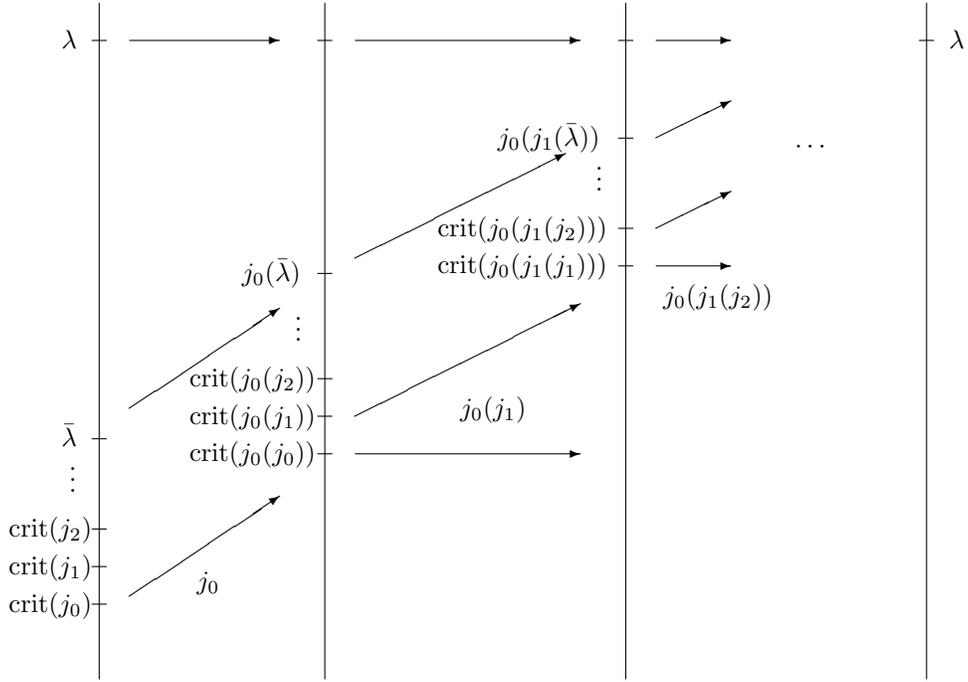

Our next technical tool is the notion of a square root.

\begin{dfn} Suppose that $j$ and $k$ are in $\Ecal_{\lambda}$. We say that $k$ is a  \emph{square root} of $j$ if $kk = j$. 
\end{dfn}

Our first reason for using $\Sigma^1_1$-elementary embeddings is that they are guaranteed to have square roots,
as we now show.  The following is a small extension of the construction implicit in \cite[Lemma~2.2]{Laver:1997};
unfortunately Laver's explanation there is particularly terse.  For those wishing to dig deeper,
Dimonte, Iannella and L\"ucke \cite{DIL:2023} provide a good overview of 
the tree representation of $\Sigma^1_1$-elementarity, 
but our hands-on presentation here was inspired by that given by Kanamori \cite[Proposition 24.4(b)]{Kanamori:2009} 
for the proof that an I2 embedding gives many rank-to-rank embeddings.

\begin{lem}\label{sqrrtlem} Suppose that $j \in \Ecal_{\lambda}$ is a $\Sigma^1_{1}$-embedding. Then for any 
$a_0,\ldots,a_m,$ $b_0, \ldots, b_n \in \vlm$ there is $k \in \Ecal_\lambda$ that is a square root of $j$ such that 
$a_0,\ldots,a_m\in\rnglpo k$ and for all $\ell\leq n$, $k(b_\ell)=j(b_\ell)$.
\end{lem}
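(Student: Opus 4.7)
The plan is to use the I2 extension of $j$ and reflect the existence of a square root down from the target of that extension, with $j$ itself serving as the witness upstairs.

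Since $j$ is $\Sigma^1_1$-elementary, by the equivalence stated just above the lemma it extends to a fully elementary $\hat j : V \to M$ with $\vlm \subseteq M$. In particular $j \in M$, and $M$ computes the application operation on elements of $\vlm$ exactly as $V$ does. The first key observation is that for any $A \in \vlm$ one has $\hat j(A) = j \cdot A$: since $\hat j \restr V_\lambda = j$, the elementwise image $\{\hat j(x) : x \in A\}$ equals $\{j(x) : x \in A\}$, which coincides with $\bigcup_{\alpha < \lambda} j(A \cap V_\alpha) = j \cdot A$. Consequently $\hat j(j) = jj$; each $\hat j(a_i) = j \cdot a_i$ lies in $\rnglpo j$ (with witness $a_i$); and by Proposition~\ref{squareagreement}, for each $\ell$ we have $j \cdot \hat j(b_\ell) = jj \cdot \hat j(b_\ell) = \hat j(j) \cdot \hat j(b_\ell)$, since $\hat j(b_\ell) = j \cdot b_\ell \in \rnglpo j$.

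Now let $\varphi(J, \vec A, \vec B)$ be the first-order statement ``there exists $K \in \vlm$ such that $K$ is a nontrivial elementary embedding $V_\lambda \to V_\lambda$, $KK = J$, $A_i \in \rnglpo K$ for all $i \leq m$, and $K \cdot B_\ell = J \cdot B_\ell$ for all $\ell \leq n$.'' The paragraph above shows that $M \models \varphi(\hat j(j), \hat j(\vec a), \hat j(\vec b))$, as witnessed by $K = j$. Applying the full elementarity of $\hat j : V \to M$ to this statement pulls it back to $V \models \varphi(j, \vec a, \vec b)$, which yields the desired square root $k$.

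The main technical point is the identification $\hat j(A) = j \cdot A$, which specialises to $\hat j(j) = jj$ and is exactly what makes the reflected statement match the lemma's conclusion; the $b_\ell$-agreement clause then piggybacks on Proposition~\ref{squareagreement}. The full force of the $\Sigma^1_1$-elementarity hypothesis is concentrated in the availability of the extension $\hat j$ with $\vlm \subseteq M$: this is what lets $j$ itself serve as a candidate square root inside $M$, and without it the reflection strategy has no target.
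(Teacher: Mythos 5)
Your overall strategy is sound and genuinely different from the paper's. The paper never passes to the class embedding: it works directly with the $\Sigma^1_1$-elementarity of $j\from V_\lambda\to V_\lambda$, building a tree $T$ of finite approximations to a square root (partial embeddings $i\from V_{\ka_{p+1}+1}\to V_{\ka_{p+2}+1}$ with $i(i\restr V_{\ka_p})=j\restr V_{\ka_{p+1}}$ and with the parameters' traces in the range), noting that the restrictions of $j$ itself form a branch through $j(T)$, and pulling the ill-foundedness of $j(T)$ back to $T$ because ``has an infinite branch'' is a $\Sigma^1_1$ statement. You instead invoke the Gaifman--Martin--Powell equivalence to obtain $\hat j\from V\to M$ with $V_{\lambda+1}\subseteq M$ and reflect the first-order assertion ``a square root with these properties exists,'' with $j$ serving as the witness in $M$. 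This is precisely the Kanamori-style argument that the paper cites as the inspiration for its hands-on tree presentation, so the underlying mathematics is the same; what your route buys is brevity, at the cost of using the (unproved in this paper) equivalence of $\Sigma^1_1$-elementarity with I2 as a black box.

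However, your justification of the central identity $\hat j(A)=j\cdot A$ is wrong as written. You assert $\hat j(A)=\{\hat j(x)\st x\in A\}$ and $\{j(x)\st x\in A\}=\bigcup_{\al<\lambda}j(A\cap V_\al)$; both equalities are false, because each conflates the \emph{value} of an embedding at a set with the \emph{pointwise image} of that set. For instance, taking $A=\ka_0=\crit j$, one has $\{\hat j(x)\st x\in\ka_0\}=\ka_0$ while $\hat j(\ka_0)=\ka_1$; similarly $j(A\cap V_\al)$ is the value of $j$ at the set $A\cap V_\al$, not its pointwise image, so the right-hand union equals $\ka_1$ while your middle expression equals $\ka_0$. (The two ends of your chain happen to agree, which is why the conclusion is salvageable, but the chain itself does not establish it.) The identity is true and standard, and the correct argument is: for $\al<\lambda$, $\hat j(A)\cap V_{\hat j(\al)}=\hat j(A\cap V_\al)=j(A\cap V_\al)$ since $A\cap V_\al\in V_\lambda$ and $\hat j\restr V_\lambda=j$; and since $\hat j$ is continuous at the cofinality-$\omega$ ordinal $\lambda$, so that $\hat j(\lambda)=\sup_n\hat j(\ka_n)=\lambda$ and hence $\hat j(A)\subseteq V_\lambda$, taking the union over $\al<\lambda$ yields $\hat j(A)=\bigcup_{\al<\lambda}j(A\cap V_\al)=j\cdot A$. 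Note that the fixed-point fact $\hat j(\lambda)=\lambda$ (together with $V_\lambda^M=V_\lambda$ and $V_{\lambda+1}^M=V_{\lambda+1}$) is also used silently when you claim $M\models\varphi(\hat j(j),\hat j(\vec a),\hat j(\vec b))$, since $\lambda$ and $V_\lambda$ are hidden parameters of $\varphi$ that must be fixed by $\hat j$; this should be made explicit. With that repair, the remaining steps --- $\hat j(j)=jj$, membership of $\hat j(a_i)$ in $\rnglpo j$, the appeal to Proposition~\ref{squareagreement} for the $b_\ell$, and the absoluteness of $\varphi$ between $M$ and $V$ granted $V_{\lambda+1}\subseteq M$ --- go through.
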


\begin{proof}
Note first that by Proposition~\ref{squareagreement}, to ensure that $kb_\ell = kkb_\ell$ for all $\ell\leq n$,
it suffices to show that each $b_\ell$ is in $\rnglpo k$.  Thus, there is no need to differentiate between the
$a_\ell$ parameters and the $b_\ell$ parameters, and we shall henceforth omit any mention of the $b_\ell$ parameters.

The idea of the proof is the fairly common one in large cardinal implication arguments, 
of ``pulling back via $j$'' the existence of
an embedding with properties witnessed by $j$ itself.  As an embedding from $V_\lambda$ to $V_\lambda$, however,
$j$ is too complex an object to make this ``pulling back'' argument directly.  
Thus, we consider a tree of partial embeddings.

So, let $\la\ka_p\st p\in\omega\ra$ be the critical sequence of $j$,  
and define $T$ be the set of functions $i$ such that for some $p\in\omega$,
\begin{itemize}
    \item $i$ is an elementary embedding from $V_{\ka_{p+1}+1}$ to $V_{\ka_{p+2}+1}$,
    \item $i(i\restr V_{\ka_p})=j\restr V_{\ka_{p+1}}$, and
    \item for all $\ell\leq m$, $a_\ell\cap V_{\ka_{p+2}}\in\rng i$.
\end{itemize} 

For all $i$ and $i'$ in $T$, say that $i<i'$ if $i$ extends $i'$ as a function,
that is, $i\supset i'$.
Clearly $T$ is a tree with this partial ordering, and if $T$ has an infinite branch $B$, then defining
$k =\bigcup B$, we have that $k$ is
an elementary embedding from $V_\lambda$ to $V_\lambda$ (as $V_\lambda$ is the direct limit of 
elementary substructures $V_{\ka_p}$) with $a_\ell\in\rnglpo k$ for every $\ell\leq m$, and such that
$kk=j$.  That is, $k$ will be the required square root.

We claim that $T$ indeed has an infinite branch---that is, $T$ is ill-founded.  For consider $j(T)$;
its members are elementary embeddings $i\from V_{\ka_{p+2}+1}\to V_{\ka_{p+3}+1}$, such that
$i(i\restr V_{\ka_{p+1}}) = j(j\restr V_{\ka_{p+1}})$, 
and for all $\ell\leq m$, $j(a_\ell)\cap V_{\ka_{p+3}}\in\rng i$.
Thus, $j\restr V_{\ka_{p+2}+1}$ is itself in $j(T)$ for every $p\in\omega$,
and hence $j(T)$ has an infinite branch.  The existence of an infinite  branch through $j(T)$ is
a $\Sigma^1_1$ statement in the parameter $j(T)$, and so by $\Sigma^1_1$-elementarity of $j$,
there is an infinite branch through $T$.  So $T$ is indeed ill-founded, and the union of any
infinite branch yields the desired square root.
\end{proof}

We note that the analogous result for $\Sigma^1_m$-elementary embeddings also holds for odd $m>1$, yielding 
a $\Sigma^1_{m-1}$-elementary square root.  The proof is via a more straight-forward ``pulling back of $j$''
argument than the above; see Laver \cite[Lemma 2.2]{Laver:1997} for an outline.

\section{Constructing a free left distributive algebra on two generators}\label{Main}

In this section we use the inverse limit construction to produce non-trivial rank-to-rank embeddings that are independent of one another in the sense that the algebra they generate is isomorphic to the free, two-generated, left distributive algebra. 
Laver \cite{Laver:1992} first showed that the closure of a single non-trivial elementary embedding under application generates the free left distributive algebra on a single generator; our results extend that work.

In what follows, let $j$ be an element of $\mathcal{E}_{\lambda}$ with critical sequence $\langle \kappa_i : i < \omega \rangle$. Call $[\kappa_{2n}, \kappa_{2n+1})$ an \emph{even interval of the critical sequence of $j$} (or \emph{even interval}) and $[\kappa_{2n+1}, \kappa_{2n+2})$ an odd interval of the critical sequence of $j$ (or \emph{odd interval}).

\begin{prop} \label{disjointrngs} 
     Suppose $j \in \Emb_{\lambda}$ has critical sequence $\la \kappa_i:\, i < \omega \ra$, and let $k$ be the inverse limit of the right powers of $j$, namely: 
    $$k = j \circ j^{(1)} \circ j^{(2)} \circ \cdots.$$
    Then, up to $\lambda$, all the ordinals in the range of $k$ are either less than $\kappa_0$ or contained in an odd interval of the critical sequence of $j$, i.e., $$\rng k \cap \lambda \sseq [0,\kappa_0) \cup \bigcup_{n < \omega} [\kappa_{2n+1}, \kappa_{2n+2}).$$
\end{prop}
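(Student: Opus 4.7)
The plan is to track the image of each ordinal $\alpha < \lambda$ under the finite composite that computes $k(\alpha)$. By Proposition~\ref{basicsprop}(\ref{critscof}) the critical points $\crit(j^{(m)}) = \kappa_m$ are cofinal in $\lambda$, so $\bar\lambda_{\vec k} = \lambda$ and the inverse limit makes sense on all of $V_\lambda$. For any ordinal $\alpha < \lambda$, either $\alpha < \kappa_0$, in which case $\alpha$ is below the critical point of every $j^{(m)}$ and so $k(\alpha) = \alpha \in [0,\kappa_0)$; or there is a unique $n \in \omega$ with $\kappa_n \leq \alpha < \kappa_{n+1}$. In the latter case $j^{(m)}(\alpha) = \alpha$ for all $m \geq n+1$, so by the definition of the inverse limit,
\[
k(\alpha) = \bigl(j^{(0)} \circ j^{(1)} \circ \cdots \circ j^{(n)}\bigr)(\alpha),
\]
a composite of the first $n+1$ right powers of $j$ applied to $\alpha$.

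The core step is then a short induction on $i$ for $0 \leq i \leq n$, showing that after applying $j^{(n)}, j^{(n-1)}, \ldots, j^{(n-i)}$ in that order the image lies in $[\kappa_{n+i+1}, \kappa_{n+i+2})$. The base case $i = 0$ follows from Lemma~\ref{jmsonkappans}, which gives $j^{(n)}(\kappa_n) = \kappa_{n+1}$ and $j^{(n)}(\kappa_{n+1}) = \kappa_{n+2}$; monotonicity of $j^{(n)}$ on ordinals then pushes $[\kappa_n, \kappa_{n+1})$ into $[\kappa_{n+1}, \kappa_{n+2})$. The inductive step is identical in flavour: since $n-i-1 \leq n+i+1$, Lemma~\ref{jmsonkappans} yields $j^{(n-i-1)}(\kappa_{n+i+1}) = \kappa_{n+i+2}$ and $j^{(n-i-1)}(\kappa_{n+i+2}) = \kappa_{n+i+3}$, and monotonicity again pushes the current interval up by exactly one level of the critical sequence.

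Taking $i = n$ in the induction, $k(\alpha) \in [\kappa_{2n+1}, \kappa_{2n+2})$, which is an odd interval because $2n+1$ is odd regardless of the parity of $n$. Together with the trivial case $\alpha < \kappa_0$, this gives the claimed inclusion of $\rng k \cap \lambda$. I do not foresee any real obstacle here; the proof is careful bookkeeping with Lemma~\ref{jmsonkappans}. The key quantitative observation is that each of the $n+1$ right powers that actually moves $\alpha$ shifts the relevant interval up by one level of the critical sequence, so starting at level $n$ and applying all $n+1$ nontrivial factors lands exactly at the odd level $2n+1$.
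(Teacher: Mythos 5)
Your proof is correct and follows essentially the same route as the paper's: reduce $k(\alpha)$ to the finite composite of the right powers that actually move $\alpha$, then use Lemma~\ref{jmsonkappans} together with monotonicity on ordinals to track the interval $[\kappa_n,\kappa_{n+1})$ up to $[\kappa_{2n+1},\kappa_{2n+2})$. The only cosmetic difference is that you run the bookkeeping as an explicit induction one factor at a time, whereas the paper applies the lemma to the two endpoints of the interval in one step.
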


\begin{proof}
Recall that for any $n$ in $\omega$, the critical point of the $n^{th}$ right power of $j$, $j^{(n)} = j(j^{(n-1)})$, is $\ka_n$. Thus the sequence of critical points of the right powers of $j$ is exactly the critical sequence of $j$. Let $k_n = j^{(n)}$. Then, for the sequence $\vec{k}=\la k_n\st n<\omega\ra$, the critical points of the $k_n$ are increasing and cofinal in $\lambda$ ($\bar \lambda_{\vec{k}} = \lambda$), and $k$ is in $\Ecal_\lambda$.

Consider an arbitrary ordinal $\al<\lambda$, and let $n$ in $\omega$ be least such that $\alpha$ is less than the critical point of $k_n$, namely, $\al<\ka_{n}$. We show that the image of $\alpha$ is either less than $\kappa_0$ or is contained in an odd interval of the critical sequence of $j$.

If $n=0$ then $\alpha$ is smaller than the critical point of $j$ and is unmoved by $k$: $k(\al)=\al$ and $\alpha$ is in $[0,\ka_0)$.

Otherwise, by our choice of $n$, $\ka_{n-1}\leq\al<\ka_n$. 
By definition of the inverse limit together with the fact that the critical points of the $j^{(n)}$ are increasing
with $\crit j^{(n)}=\ka_n$, $k(\al)= j\of j^{(1)}\of \cdots\of j^{(n-1)}(\al)$.

Since $\ka_{n-1}\leq\al<\ka_n$, elementarity gives the following relation: $$j\of j^{(1)}\of \cdots\of j^{(n-1)}(\ka_{n-1}) \leq k(\al)< j\of j^{(1)}\of \cdots\of j^{(n-1)}(\ka_n).$$  

Applying Lemma \ref{jmsonkappans} to both the outer expressions $n$ times each, we have 
that $j \circ j^{(1)} \circ \cdots \circ j^{(n-1)}(\kappa_{n-1})=\ka_{2n-1}$
and $j \circ j^{(1)} \circ \cdots \circ j^{(n-1)}(\kappa_{n})=\ka_{2n}$.
Together these give that 
$\ka_{2n-1}\leq k(\al)<\ka_{2n},$ and in particular, $k(\al)$ lies in an odd interval of the critical sequence of $j$, as claimed.
\end{proof}

If we omit the initial ``$j\of$'' from the deifnition of $k$, we get another embedding $\ell$ with all ordinals in its range either being less than $\ka_0$ or
in an \emph{even} interval; thus, the intersections of the ranges of $k$ and $\ell$ will be disjoint above $\ka_0$.
We now want to show that we can get an even stronger dissociation between embeddings. To do this, we discuss the set of all elements in $V_{\lambda}$ definable from parameters from particular sets. 

Recall that an element $b\in M$ is definable from the set $A$ in the $\mathcal{L}$-structure $\mathcal{M}$ with underlying set $M$
if and only if there are a first-order formula $\varphi[x, y_1, \ldots, y_m]$ in the language $\calL$ 
and $a_1, \ldots, a_m \in A$ such that $b$ is the unique element of $M$ for which $M \models \varphi[b, a_1, \ldots, a_m]$ holds.

\begin{dfn}
We denote the collection of all elements of $V_{\lambda}$ definable from the set $A$ 
by $\Def^{V_\lambda}(A)$.
That is,
\[
\Def^{V_\lambda}(A) 
= \{ b \in V_\lambda|\, b \text{ is definable in $V_\lambda$ from elements of $A$}\}.
\]
\end{dfn}

The next proposition shows that for the embedding $k$ defined in Proposition \ref{disjointrngs}, 
no element in an even interval $[\ka_{2n},\ka_{2n+1})$ of the critical sequence of $j$ is definable from the range of $k$, 
even if we allow all the ordinals up to $\ka_{2n}$ as parameters.

\begin{prop} \label{dissprop} 
    Suppose $j$ in $\Ecal_{\lambda}$ is $\Sigma^1_{1}$-elementary with critical sequence $\la \kappa_i|\, i < \omega \ra$, and define 
    $$k = j \circ j^{(1)} \circ j^{(2)} \circ \cdots.$$
    Then for every $n < \omega$, the $n^{th}$ even interval of the critical sequence of $j$ is disjoint from the elements of $V_{\lambda}$ definable from the range of $k$ and sets of rank less than $\ka_{2n}$:
    $$[\kappa_{2n}, \kappa_{2n+1}) \cap \Def^{\,V_\lambda} (\rng k \cup V_{\kappa_{2n}})  = \emptyset.$$
\end{prop}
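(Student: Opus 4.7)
The plan is to argue by contradiction. Suppose some $\alpha \in [\kappa_{2n}, \kappa_{2n+1})$ is the unique element of $V_\lambda$ satisfying a formula $\varphi(x, \vec A, \vec p)$ with $\vec A = (A_1,\ldots,A_m) \in (\rng k)^m$ and $\vec p \in V_{\kappa_{2n}}^r$. Writing each $A_i = k(B_i)$ with $B_i \in V_\lambda$, I will construct $h \in \Ecal_\lambda$ fixing every $A_i$ and every component of $\vec p$ but with $h(\alpha) \ne \alpha$; elementarity of $h$ applied to the uniqueness of $\alpha$ as the witness of $\varphi$ then forces $h(\alpha) = \alpha$, a contradiction.

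The key construction is $h = F \cdot j^{(n)}$ where $F = j \circ j^{(1)} \circ \cdots \circ j^{(n-1)}$; this lies in $\Ecal_\lambda$ by Proposition \ref{basicsprop}. Iterating Lemma \ref{jmsonkappans} gives $F(\kappa_n) = \kappa_{2n}$ and $F(\kappa_{n+1}) = \kappa_{2n+1}$, so $\crit(h) = F(\crit(j^{(n)})) = F(\kappa_n) = \kappa_{2n}$ (hence $h$ fixes $\vec p$), and $h(\kappa_{2n}) = h(F(\kappa_n)) = F(j^{(n)}(\kappa_n)) = F(\kappa_{n+1}) = \kappa_{2n+1}$. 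By monotonicity, for every $\alpha \in [\kappa_{2n}, \kappa_{2n+1})$ one gets $h(\alpha) \geq h(\kappa_{2n}) = \kappa_{2n+1} > \alpha$, so $h$ always moves $\alpha$. When each $B_i$ lies in $V_{\kappa_n}$, the restriction $F \restr V_{\kappa_n}$ coincides with $k \restr V_{\kappa_n}$ (as $j^{(n)}, j^{(n+1)}, \ldots$ all fix $V_{\kappa_n}$), so $A_i = F(B_i)$; and since $j^{(n)}$ fixes $V_{\kappa_n} \ni B_i$, one has $h(A_i) = F(j^{(n)}(B_i)) = F(B_i) = A_i$, completing the contradiction in this case.

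The main obstacle is therefore to reduce the general case to the one where $B_i \in V_{\kappa_n}$ for all $i$, and this is where the $\Sigma^1_1$-elementarity of $j$ enters through Lemma \ref{sqrrtlem}. Two observations should drive the reduction: first, by Proposition \ref{disjointrngs} and $k(\kappa_n) = \kappa_{2n+1}$ together with $k$'s elementary preservation of ranks, $\rng k \cap V_{\kappa_{2n+1}} = k[V_{\kappa_n}]$; and second, $V_{\kappa_{2n+1}} \prec V_\lambda$, since $\kappa_{2n+1}$ is the critical point of $j^{(2n+1)} \in \Ecal_\lambda$, so iterating the elementary inclusion $V_{\kappa_{2n+1}} \hookrightarrow V_{\kappa_{2n+2}}$ obtained by restricting $j^{(2n+1)}$ along its critical sequence yields full elementarity into $V_\lambda$. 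The delicate technical step is dealing with parameters $A_i$ whose rank exceeds $\kappa_{2n+1}$: I would pass to the Mostowski collapse of the Skolem hull of $\vec A \cup \vec p \cup \{\alpha\}$ in $V_\lambda$ and use condensation to extract a partial elementary embedding $V_{\bar\lambda} \to V_\lambda$ with critical point $\kappa_{2n}$ reflecting the definition of $\alpha$, and then invoke Lemma \ref{sqrrtlem} together with the inverse-limit techniques of Section \ref{Tools} to promote it to a full embedding in $\Ecal_\lambda$ that still fixes the original high-rank $A_i$; $\Sigma^1_1$-elementarity is needed here to control the ranges of the approximating embeddings used in the promotion.
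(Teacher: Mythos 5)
Your first two paragraphs are fine as far as they go: $h=F\cdot j^{(n)}$ with $F=j\of j^{(1)}\of\cdots\of j^{(n-1)}$ does have critical point $F(\ka_n)=\ka_{2n}$, moves every ordinal in $[\ka_{2n},\ka_{2n+1})$, and fixes $k(B)$ whenever $B\in V_{\ka_n}$. The problem is that this only covers parameters from $\rng k$ that were never at issue, and the strategy itself breaks down for the rest. Note first that if $B\in V_{\ka_n}$ then $\rank(k(B))=k(\rank B)<k(\ka_n)=\ka_{2n+1}$, and by Proposition \ref{disjointrngs} this rank avoids $[\ka_{2n},\ka_{2n+1})$, so in fact $k(B)\in V_{\ka_{2n}}$: your ``easy case'' parameters are already among the $V_{\ka_{2n}}$ parameters you were permitted anyway. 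For the remaining parameters --- elements of $\rng k$ of rank at least $\ka_{2n+1}$, which are the whole content of the proposition --- no embedding of the kind you need exists. Any $h\in\Ecal_\lambda$ has critical sequence $\la\mu_i\ra$ cofinal in $\lambda$ (Proposition \ref{basicsprop}), so for $\gamma\in[\mu_i,\mu_{i+1})$ we get $h(\gamma)\geq h(\mu_i)=\mu_{i+1}>\gamma$: thus $h$ moves \emph{every} ordinal in $[\crit h,\lambda)$, and hence (since rank is definable) moves every set of rank at least $\crit h$. An $h$ that moves some $\alpha\in[\ka_{2n},\ka_{2n+1})$ must have $\crit h\leq\alpha<\ka_{2n+1}$, and therefore cannot fix, say, the parameter $\ka_{2n+1}=k(\ka_n)\in\rng k$, nor any other parameter of rank $\geq\ka_{2n+1}$. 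So the object your Skolem-hull/condensation ``promotion'' step is meant to produce --- a member of $\Ecal_\lambda$ with critical point $\leq\alpha$ that fixes the high-rank $A_i$ --- cannot exist; that final step is not a fillable gap but the point at which the strategy collapses.

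The paper's proof inverts the quantifier. Rather than seeking an embedding that fixes the parameters and moves the witness, it uses Lemma \ref{sqrrtlem} (this is where $\Sigma^1_1$-elementarity enters) to obtain a square root $\hat j$ of the finite head $j\of j^{(1)}\of\cdots\of j^{(n-1)}$ of $k$ whose range contains the finitely many given parameters and which agrees with that head where needed, and then forms the new inverse limit $\hat k=\hat j\of j^{(n)}\of j^{(n+1)}\of\cdots$. One checks that all the parameters from $\rng k\cup V_{\ka_{2n}}$ lie in $\rng\hat k$ while $\rng\hat k\cap[\ka_{2n},\ka_{2n+1})=\emptyset$; then the observation you already make for the low-rank case --- anything definable in $V_\lambda$ from elements of the range of an elementary embedding lies in that range --- finishes the argument.
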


\begin{proof} 
Consider first the $n = 0$ case, in which we wish to show that $$[\kappa_{0}, \kappa_{1}) \cap \Def^{V_\lambda} (\rng k \cup V_{\kappa_{0}}) = \emptyset.$$  Note that $V_{\kappa_0} \sseq \rng k$ because $k$ is constant on everything below $\ka_0$. The claimed disjointness follows from the fact that anything definable from $\rng k$ is already in $\rng k$:
if $c$ is the unique element of $V_\lambda$ such that $V_\lambda\sat\varphi(c,k(a_0),\ldots,k(a_i))$, then by elementarity of $k$, there is
a $\bar c\in V_\lambda$ such that $V_\lambda\sat\varphi(\bar c,a_0,\ldots,a_i)$, and then $V_\lambda\sat\varphi(k(\bar c),k(a_0),\ldots,k(a_i))$, whence $c=k(\bar c)$. 

Next we consider the case that $n = 1$ and establish the desired disjointness: $$[\kappa_{2}, \kappa_{3}) \cap \Def^{V_\lambda} (\rng k \cup V_{\kappa_{2}}) = \emptyset.$$ Let $b \in \rng k$ and $a \in V_{\kappa_{2}}$. 
By Lemma \ref{sqrrtlem} there is a square root $\bar j: V_\lambda \to V_\lambda$ of $j$ such that the following hold:
\begin{enumerate}
\item $\bar j ( \kappa_0) = \kappa_1$,  $\bar j (\kappa_1) = \kappa_2$, and $\bar j (\kappa_2) = \kappa_3$.
\item $\bar j (j^{-1}(b)) = b$ and $a \in \rng \bar j$. 
\end{enumerate}
Let $\bar k = \bar j \circ j^{(1)} \circ j^{(2)} \circ \cdots.$ Note that we have 
\begin{align*}
\bar k( k^{-1}(b)) & = ( \bar j \circ j^{(1)} \circ j^{(2)} \circ \cdots)( (j \circ j^{(1)} \circ j^{(2)} \circ \cdots)^{-1}(b)) \\
&= ( \bar j \circ j^{(1)} \circ j^{(2)} \circ \cdots)( (j^{(1)} \circ j^{(2)} \circ \cdots)^{-1}(j^{-1}(b)))  \\
& =
  \bar j ( j^{-1}(b)) = b.
 \end{align*}
Also since $a \in V_{\kappa_2}$, we have that 
$\bar j^{-1}(a) \in V_{\kappa_1}$. So the fact that $\crit j^{(1)} \circ j^{(2)} \circ \cdots = \kappa_1$ implies that $\bar j^{-1}(a) \in \rng j^{(1)} \circ j^{(2)} \circ \cdots$. Hence $a \in \rng \bar k$. But notice that $\rng \bar k \cap [\kappa_2, \kappa_3) = \emptyset$, since $\rng j^{(1)} \cap [\kappa_1, \kappa_2) = \emptyset$ implies by elementarity that $\rng \bar j j^{(1)} \cap [\kappa_2, \kappa_3) = \emptyset$ and 
$$
\rng \bar k \sseq \rng (\bar j \circ j^{(1)}) = \rng (\bar j j^{(1)} \circ \bar j) \sseq \rng \bar j j^{(1)}.$$ 
Since $a, b \in \rng \bar k$, it must as before be the case that $\Def^{V_\lambda}(a,b)\subseteq\rng\bar k$, 
and hence $\Def^{V_\lambda} (a, b) \cap [\kappa_2, \kappa_3) = \emptyset$. 

Proving the Proposition for a general $n \ge 1$ is very similar. Suppose $b \in \rng k$ and $ a \in V_{\kappa_{2n}}$, 
and let $\ell=j \circ j^{(1)} \circ \cdots \circ j^{(n-1)}$. 
Using Lemma \ref{sqrrtlem} and Laver's result \cite[Theorem 2.4]{Laver:1997} that each $j^{(i)}$ is also $\Sigma^1_{1}$-elementary, 
we may take $\hat j$ to be a square root of $\ell$ with 
\begin{align*}
    \hat j(\kappa_{n-1}) &= \ell(\kappa_{n-1}) = \kappa_{2n-1}, \\
    \hat j(\kappa_{n}) &= \ell(\kappa_n) = \kappa_{2n},\\  
    \hat j(\kappa_{n+1}) &= \ell(\kappa_{n+1}) = \kappa_{2n+1},
\end{align*}
and $a,b \in \rng \hat j$ with $\hat j(\ell^{-1}(b)) = b$. We then similarly find 
for 
$$\hat k =  \hat j \circ j^{(n)} \circ j^{(n+1)} \circ \cdots$$
that $a, b \in \rng \hat k$.
From $\rng j^{(n)} \cap [\kappa_{n}, \kappa_{n+1}) = \emptyset$ it follows by elementarity that 
$$\rng \hat j j^{(n)} \cap [\kappa_{2n}, \kappa_{2n+1}) = \emptyset,$$ 
so since 
$$\rng \hat k \sseq \rng (\hat j \circ j^{(n)}) = \rng (\hat j j^{(n)} \circ \hat j) \sseq \rng \hat j j^{(n)},$$ 
we have that $[\kappa_{2n}, \kappa_{2n+1}) \cap \rng \hat k = \emptyset.$
Once again, this implies that 
$$\Def^{V_\lambda} (a, b) \cap [\kappa_{2n}, \kappa_{2n+1}) = \emptyset,$$ 
since if $c$ is unique such that $V_\lambda\sat\varphi(c,a,b)$, 
then by elementarity of $\hat k$, there is a unique $\bar c$ such that
$V_\lambda\sat\varphi(\bar c,\hat{k}^{-1}a,\hat{k}^{-1}b),$
and $c$ must be $\hat{k}\bar c$.
Hence, as $a$ was arbitrary in $V_{\ka_{2n}}$ and $b$ was arbitrary in $\rng k$, the result follows.
\end{proof}

We now introduce a kind of uniform restriction of I3 embeddings, extending the concept $\overset{*}{\cap}$ originally introduced by Laver
\cite{Laver:1995}, which we also review. 

\begin{dfn}
     For $\delta \le \lambda$ and $j \in \Ecal_\lambda$, we denote by $j\starint\delta$ the set
     \[
     \{(a,b)\in V_\delta\times V_\delta\st b\in j(a)\}.
     \]
Thus, $j\starint{\delta}$ may be seen as encoding the function from $V_\delta$ to $V_{\delta+1}$ taking $a$ to $j(a)\cap V_\delta$;
note in particular that this is defined even for those $a\in V_\delta$ with $j(a)\notin V_\delta$, 
unlike $(j\restr V_\delta)\cap (V_\delta\times V_\delta)$.     
We then define $j \overset{u}{\cap} \delta$, the \emph{uniform intersection of $j$ up to $\delta$}, 
to be the sequence of the weak intersections up to $\delta$:
$$j \overset{u}{\cap} \delta = \la j \starint\alpha|\, \alpha < \delta \ra.$$ 
\end{dfn}

Note that for $\delta$ a limit ordinal, $\beta \le \delta$, and $j$ and $k$ in $\Ecal_\lambda$, we have that 
$$(j \starint\delta)(k \overset{u}{\cap} \beta) = j(k) \overset{u}{\cap} \gamma$$
where $\gamma = \min \{ j(\beta), \delta \}$. To see this, we compute 
\begin{align*}
    j \starint\delta(k \overset{u}{\cap} \beta) &= j(\la k \starint\alpha\st\, \alpha < \beta \ra) \cap V_\delta\\
    &= \la j(k) \starint\alpha\st\, \alpha < j(\beta) \ra \cap V_\delta \\
    &=\la j(k) \starint\alpha\st\, \alpha < \gamma \ra.
\end{align*}

Also note that for $\delta$ a limit, from $j \overset{u}{\cap} \delta$ we can define $j \starint\delta$, as
$$(j \starint\delta)(a) = \bigcup_{\alpha < \delta} (j \starint\alpha)(a).$$ 

We now prove a lemma which allows us to propagate the property in Proposition \ref{dissprop} to \emph{iterated} ranges (see Definition \ref{iterrange}). 

\begin{lem} \label{irngempty} Suppose that $j$ in $\Ecal_\lambda$ is an elementary embedding, 
$\delta$ and $\mu$ are 
cardinals such that $\crit j < \delta < \mu < \lambda$, and 
$$\Def^{\,V_\lambda} (\rng j \cup V_\delta) \cap [\delta, \mu) = \emptyset.$$ 
Then for any $k \in \Ecal_\lambda$ 
of the form $k = j j_1 j_2 \cdots j_n$, where $\crit j j_1 \cdots j_i < \delta$ for all $i \le n$, we have that 
$$
\rng k \cap [\delta, \mu) = \emptyset.$$ 
\end{lem}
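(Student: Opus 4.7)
The plan is to prove the statement by induction on $n$, writing $k_i = jj_1\cdots j_i$ for $0 \le i \le n$, so $k = k_n = k_{n-1} \cdot j_n = k_{n-1}(j_n)$. The base case $n = 0$ is immediate: since $\rng j \subseteq \Def^{V_\lambda}(\rng j \cup V_\delta)$, the hypothesis yields $\rng j \cap [\delta,\mu) = \emptyset$.

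For the inductive step I first observe a useful consequence of the inductive hypothesis: $k_{n-1}(\delta) \geq \mu$. Indeed, $\delta \in [\delta,\mu)$ and $\delta > \crit k_{n-1}$, so $k_{n-1}(\delta) > \delta$; and $k_{n-1}(\delta) \in \rng k_{n-1}$ lies outside $[\delta,\mu)$ by the IH, forcing $k_{n-1}(\delta) \geq \mu$. Now suppose for contradiction $\alpha = k_n(\beta) \in \rng k_n \cap [\delta,\mu)$. I split on whether $\beta \in \rng k_{n-1}$. If $\beta = k_{n-1}(\beta')$ for some $\beta'$, then the elementarity of applications gives
\[
\alpha = k_{n-1}(j_n)(k_{n-1}(\beta')) = k_{n-1}(j_n(\beta')) \in \rng k_{n-1},
\]
which contradicts the IH. So $\beta \notin \rng k_{n-1}$. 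Since $\beta < \mu \leq k_{n-1}(\delta)$, the restriction formula $k_n \restr V_{k_{n-1}(\delta)} = k_{n-1}(j_n \restr V_\delta)$ applies and we obtain $\alpha = F(\beta)$, where $F := k_{n-1}(j_n\restr V_\delta) \in \rng k_{n-1}$.

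To close the argument, I want to show the ``capped'' restriction $F \cap (V_\mu \times V_\mu)$ lies in $\Def^{V_\lambda}(\rng j \cup V_\delta)$. Once this is established, $\beta$ is uniquely determined as the $x$ satisfying $(x,\alpha) \in F \cap (V_\mu \times V_\mu)$ from the parameters $F \cap (V_\mu \times V_\mu)$ and $\alpha$; but $\alpha$ must then also be co-definable with $\beta$, and by pushing the definability in both directions and using the hypothesis $\Def^{V_\lambda}(\rng j \cup V_\delta) \cap [\delta,\mu) = \emptyset$, one extracts a contradiction from the membership of either ordinal in $[\delta,\mu)$.

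The main obstacle is precisely verifying that this capped $F$ sits inside $\Def^{V_\lambda}(\rng j \cup V_\delta)$. I expect this will require strengthening the inductive hypothesis to track, at each stage $i$, that the data $(k_i \restr V_\mu) \cap (V_\mu \times V_\mu)$ — equivalently, a suitable $k_i \starint\mu$ — lies in $\Def^{V_\lambda}(\rng j \cup V_\delta)$. Propagating this through the recursion $k_i = k_{i-1}(j_i)$ should use the identity $k_i \overset{u}{\cap}\mu = (k_{i-1}\starint\mu)(j_i\overset{u}{\cap}\mu)$ already recorded in the paper, combined with the fact that the jump $k_{i-1}(\delta)\geq \mu$ ensures that all values of $k_{i-1}$ relevant to reconstructing $F\cap(V_\mu\times V_\mu)$ are already handled by the previous step of the induction. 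Capping at $\mu$ is what rescues us from needing $j_n\restr V_\delta$ itself to be in $\Def^{V_\lambda}(\rng j\cup V_\delta)$, since the ``new'' pairs in $F$ whose outputs exceed $\mu$ can be collapsed to $\mu$ without affecting our argument.
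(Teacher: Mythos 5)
Your skeleton matches the paper's proof: induct on $n$, strengthen the inductive hypothesis to assert that a suitably capped restriction of $k_i=jj_1\cdots j_i$ lies in $\Def^{V_\lambda}(\rng j\cup V_\delta)$, and propagate this using the $\starint{\alpha}$ / $\overset{u}{\cap}$ application identity; your observation that $k_{n-1}(\delta)\ge\mu$ follows from the inductive hypothesis is also the right starting point. However, the decisive step is left as an expectation, and the specific identity you propose for it fails as stated. In $k_i\overset{u}{\cap}\mu=(k_{i-1}\starint{\mu})(j_i\overset{u}{\cap}\mu)$, the argument $j_i\overset{u}{\cap}\mu$ has rank roughly $\mu>\delta$, so it is not an element of $V_\delta$, and since the $j_i$ are arbitrary there is no reason for it to lie in $\Def^{V_\lambda}(\rng j\cup V_\delta)$; the right-hand side is therefore not exhibited as definable from legal parameters. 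The same problem afflicts $F=k_{n-1}(j_n\restr V_\delta)$: it lies in $\rng k_{n-1}$, not $\rng j$, and $\rng k_{n-1}\not\sseq\Def^{V_\lambda}(\rng j\cup V_\delta)$ precisely because $j_n\restr V_\delta$ (rank about $j_n(\delta)$) is not a legal parameter. The repair, which is the real content of the paper's argument, is to feed in $j_i\overset{u}{\cap}\mu_{i-1}$ where $\mu_{i-1}<\delta$ is \emph{least} with $k_{i-1}(\mu_{i-1})\ge\mu$ (such an ordinal exists below $\delta$ by the closure-point argument together with the inductive range-disjointness); this parameter genuinely lies in $V_\delta$, and the resulting cap level $\gamma_i=\min\{\gamma_{i-1},k_{i-1}(\mu_{i-1})\}$ is still $\ge\mu$, which is all the endgame needs. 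Relatedly, capping at exactly $\mu$ is itself illegitimate: $\mu$ need not be definable from $\rng j\cup V_\delta$, so the cap must be one of these tracked definable ordinals $\gamma_i\ge\mu$ rather than $\mu$ itself.

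There is a second, smaller gap in your endgame. In the case $\beta\notin\rng k_{n-1}$ you only know $\beta\le\alpha<\mu$, so $\beta$ may lie in $[\delta,\mu)$ and is then not a legal parameter; the proposed ``co-definability in both directions'' is circular in that situation ($\beta$ definable from $\alpha$ and $\alpha$ from $\beta$ yields no contradiction). The fix is to take $\beta$ \emph{least} with $k_n(\beta)\in[\delta,\mu)$ and show $\beta<\delta$: since $\lambda$ is the least closure point of $k_n$ above $\crit k_n<\delta$, some $\beta^*<\delta$ has $k_n(\beta^*)\ge\delta$, and minimality forces $\beta\le\beta^*<\delta$ (if $k_n(\beta^*)\ge\mu$ then $k_n(\beta)<\mu$ gives $\beta<\beta^*$ directly). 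Then $\alpha=(k_n\starint{\gamma_{n-1}})(\beta)$ is definable from the tracked parameter and $\beta\in V_\delta$, yielding the contradiction at once; your case split on whether $\beta\in\rng k_{n-1}$ becomes unnecessary.
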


\begin{proof} 
Note first that since
\begin{itemize}
    \item $\crit j<\delta<\lambda$, 
    \item $\lambda$ is the least closure point of $j$ above $\crit j$, and 
    \item $\rng j$ is disjoint from $[\delta,\mu)$, 
\end{itemize}
there must be some $\mu^* < \delta$ such that $j(\mu^*) \ge \mu$,
as otherwise $\delta$ would be a closure point of $j$. 

We prove the lemma by induction on $n$. (We will describe our induction hypothesis below.) Suppose we are in the set-up of the lemma for $n = 1$. Let $\mu_0 < \delta$ be the least ordinal such that $j(\mu_0) \ge \mu$.  Note that $j(j_1 \starint{\mu_0}) = j j_1 \starint{j(\mu_0)}$. 

Suppose towards a contradiction that $\rng j j_1 \cap [\delta, \mu) \neq \emptyset$. Let $\alpha$ be least such that $(j j_1) (\alpha) \in [\delta, \mu)$. We must have that $\alpha < \delta$, since again, $\crit jj_1 < \delta$ and $\lambda$ is the
least closure point of $jj_1$ greater than $\crit jj_1$. Note that since $j(\mu_0) \ge \mu$, we have that $(j j_1 \starint{j(\mu_0)})(\alpha) = (j j_1)(\alpha)$. 
But $\alpha \in V_\delta$ and $j j_1 \starint{j(\mu_0)} \in \rng j$, hence this contradicts that 
$$\Def^{V_\lambda} (\rng j \cup V_\delta) \cap [\delta, \mu) = \emptyset.$$
So $\rng j j_1 \cap [\delta, \mu) = \emptyset$, confirming the $n=1$ case of the lemma.  

For the sake of the induction to come, let $\mu_1< \delta $ be least  such that $j j_1 (\mu_1) \geq \mu$. 
Note that, since $\mu$ is a limit ordinal, $\mu_1$ will also be a limit ordinal.
Now let $\gamma_1 = \min\{j(\mu_0), j j_1(\mu_1)\}$. We have that $$\gamma_1\in \Def^{V_\lambda} (\rng j \cup V_\delta)$$
since either $\gamma_1 \in \rng j$, or $\gamma_1 = (j j_1 \starint{j(\mu_0)})(\mu_1)$ with $\mu_1$ below $\delta$.  
Likewise, $j j_1 \starint{\gamma_1}$ and $j j_1 \overset{u}{\cap} \gamma_1$ 
also lie in $\Def^{V_\lambda} (\rng j \cup V_\delta)$.

Now assume that for all $1 \le i < n$  we have the following:
\begin{enumerate}
\item $\rng j j_1 j_2 \cdots j_i \cap [\delta, \mu) = \emptyset$.
\item $\mu_i < \delta$ is least such that $(j j_1 j_2 \cdots j_i)(\mu_i) \geq \mu$.
\item $\gamma_i = \min \{ j(\mu_0), j j_1(\mu_1), j j_1 j_2 (\mu_2), \ldots, j j_1 j_2 \cdots j_i(\mu_i)\} \ge \mu.$
\item\label{jj1jirestrinDef} $j j_1 j_2 \cdots j_i \overset{u}{\cap} \gamma_i \in \Def^{V_\lambda} (\rng j \cup V_\delta)$.
\end{enumerate}
Note first that since $\ga_{n-1}$ will be a limit ordinal, (\ref{jj1jirestrinDef}) implies also that 
\[
j j_1 j_2 \cdots j_{n-1} \starint{\gamma_{n-1}}\in\Def^{V_\lambda} (\rng j \cup V_\delta).
\]
Bringing in $j_n$, we have
$$
j j_1 j_2 \cdots j_{n} \overset{u}{\cap} \gamma_{n-1} = (j j_1 j_2 \cdots j_{n-1} \starint{\gamma_{n-1}})(j_{n} \overset{u}{\cap} \mu_{n-1}),
$$
since $\gamma_{n-1}\leq( j j_1 j_2 \cdots j_{n-1})(\mu_{n-1})$. 
Therefore, since  $j_{n} \overset{u}{\cap} \mu_{n-1} \in V_\delta$, we may conclude
that 
$$
j j_1 j_2 \cdots j_{n} \overset{u}{\cap} \gamma_{n-1} \in  \Def^{V_\lambda} (\rng j \cup V_\delta).
$$
Again, since $\gamma_{n-1}$ is a limit, this further implies that 
$j j_1 j_2 \cdots j_{n} \starint{\gamma_{n-1}} \in  \Def^{V_\lambda} (\rng j \cup V_\delta).$

Suppose towards a contradiction that $$\rng j j_1 j_2 \cdots j_{n} \cap [\delta, \mu) \neq \emptyset.$$ Let $\alpha$ be least such that $j j_1 j_2 \cdots j_{n}(\alpha) \in [\delta, \mu)$. Then $\alpha < \delta$ since $\crit j j_1 j_2 \cdots j_{n} < \delta$ and $\lambda$ is the least closure point of $j j_1 j_2\cdots j_{n}$ greater than its 
critical point. Because $\gamma_{n-1} \ge \mu$, we have 
$$(j j_1 j_2 \cdots j_{n} \starint{\gamma_{n-1}}) (\alpha) = (j j_1 j_2 \cdots j_{n})(\alpha),$$
and obtain that 
$$(j j_1 j_2 \cdots j_{n})(\alpha) \in  \Def^{V_\lambda} (\rng j \cup V_\delta).$$
This contradicts the assumption that  $ \Def^{V_\lambda} (\rng j \cup V_\delta) \cap [\delta, \mu) = \emptyset$. 
Therefore, $\rng j j_1 j_2 \cdots j_{n} \cap [\delta, \mu) = \emptyset$, as desired.

We now show that our induction hypothesis continues to hold, in case we want to continue the induction beyond $n$. 
 Let $\mu_{n} < \delta$ be  least  with $$(j j_1 j_2 \cdots j_{n})(\mu_{n}) \ge \mu.$$  Then let
$$\gamma_{n} = \min \{ j(\mu_0), j j_1(\mu_1), j j_1 j_2 (\mu_2), \ldots, j j_1 j_2 \cdots j_{n-1}(\mu_{n-1}), j j_1 j_2 \cdots j_{n}(\mu_{n})\}.$$
Clearly $\gamma_{n} \ge \mu$. 

To see that (\ref{jj1jirestrinDef}) of the induction hypothesis holds, 
notice that either $\gamma_{n} = \gamma_{n-1}$ or $\gamma_{n} =  j j_1 j_2 \cdots j_{n}(\mu_{n})$. 
We saw above that
$$j j_1 j_2 \cdots j_{n} \overset{u}{\cap} \gamma_{n-1} \in \Def^{V_\lambda} (\rng j \cup V_\delta),$$
so 
if $\gamma_{n} = \gamma_{n-1}$ then we are done. 
If on the other hand $\gamma_{n-1} > \gamma_{n} =  j j_1 j_2 \cdots j_{n}(\mu_{n})$, then 
$$(j j_1 j_2 \cdots j_{n} \starint{\gamma_{n-1}}) (\mu_{n}) = \gamma_{n}.$$
This implies that $\gamma_{n} \in \Def^{V_\lambda} (\rng j \cup V_\delta)$, and hence that 
$$j j_1 j_2 \cdots j_{n} \overset{u}{\cap} \gamma_{n} \in \Def^{V_\lambda} (\rng j \cup V_\delta),$$ 
since we can define $j j_1 j_2 \cdots j_{n} \overset{u}{\cap} \gamma_{n}$ from $\gamma_{n}$ and $j j_1 j_2 \cdots j_{n} \overset{u}{\cap} \gamma_{n-1}$.
That completes the induction, and so the proof of the lemma.
\end{proof}

We can now use this lemma to show that there are embeddings whose \emph{iterated} ranges are disjoint. 
\begin{dfn} \label{iterrange}
    For $j \in \Emb_\lambda$, define $\irng (j)$, the \emph{iterated range} of $j$, by induction as follows. $\irng_0 (j) = \rng^+ j \cap \Emb_\lambda$, and for $n < \omega$, 
$$\irng_{n+1} (j) = \bigcup_{k \in \irng_n j} \irng_0 (k).$$ 
And we define
$$\irng (j) = \irng_\omega(j) = \bigcup_{n < \omega} \irng_n(j).$$ 
\end{dfn}

\begin{thm} \label{irngthm} Suppose $j \in \Ecal_\lambda$ is a $\Sigma^1_m$-elementary embedding. Then there are $\Sigma^1_m$-elementary $k$ and $\ell$ in $\Ecal_\lambda$ with disjoint iterated ranges: $\irng (k) \cap \irng (\ell) = \emptyset$. 
\end{thm}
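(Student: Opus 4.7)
The plan is to take $k = j \circ j^{(1)} \circ j^{(2)} \circ \cdots$ (the inverse limit of Propositions \ref{disjointrngs} and \ref{dissprop}) and $\ell = j^{(1)} \circ j^{(2)} \circ j^{(3)} \circ \cdots$ (the same inverse limit with the initial $j$ factor omitted). By Proposition \ref{invlimelem}, both $k$ and $\ell$ will be $\Sigma^1_m$-elementary rank-to-rank embeddings. An analysis of $\ell$ symmetric to Proposition \ref{disjointrngs}, using Lemma \ref{jmsonkappans}, should show $\rng\ell \cap \lambda \sseq [0,\ka_1) \cup \bigcup_{n\geq 1}[\ka_{2n},\ka_{2n+1})$; and an analogous square-root argument to Proposition \ref{dissprop}, applied to the $\Sigma^1_1$-elementary composites $j^{(1)} \circ \cdots \circ j^{(n)}$, should give
\[
[\ka_{2n+1},\ka_{2n+2}) \cap \Def^{\,V_\lambda}(\rng\ell \cup V_{\ka_{2n+1}}) = \emptyset
\]
for every $n\geq 0$.

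To then show $\irng(k) \cap \irng(\ell) = \emptyset$, I would verify by induction on the iteration level that every $h \in \irng(k)$ admits a left-associated product representation $h = k j_1 j_2 \cdots j_m$ with each $j_i \in \Ecal_\lambda$: the base case $h = k \bar h_0$ uses $\Sigma^1_1$-elementarity of $k$ to conclude $\bar h_0 \in \Ecal_\lambda$ (since ``being an embedding'' is $\Pi^1_1$), and the inductive step expands $h = h' \cdot \bar h = (k j_1\cdots j_{m-1}) \cdot \bar h = k j_1\cdots j_{m-1} \bar h$ by left-associativity, with $\bar h \in \Ecal_\lambda$ obtained by propagating $\Sigma^1_1$-elementarity through the iteration (via Theorem \ref{hkandhcompk}, Martin's Theorem \ref{uponefree}, and the $\Sigma^1_m$-elementarity of $k$). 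Given such a representation, one can choose $N_h \geq 1$ large enough that $\crit(k j_1 \cdots j_i) < \ka_{2N_h}$ for every $i \leq m$; then Lemma \ref{irngempty}, applied with $j \mapsto k$, $\delta = \ka_{2n}$, and $\mu = \ka_{2n+1}$ (whose $\Def$-closure hypothesis is exactly Proposition \ref{dissprop}), will give $\rng h \cap [\ka_{2n},\ka_{2n+1}) = \emptyset$ for every $n \geq N_h$. The symmetric argument for $\ell$ should give that every $h' \in \irng(\ell)$ satisfies $\rng h' \cap [\ka_{2n+1},\ka_{2n+2}) = \emptyset$ for all sufficiently large $n$.

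If some $h \in \irng(k) \cap \irng(\ell)$ existed, then setting $N = \max(N_h, N_{h'})$ we would have $\rng h \cap [\ka_{2N},\lambda) = \emptyset$, since this tail is exactly the union of the even and odd intervals above $\ka_{2N}$. But $h \in \Ecal_\lambda$ has its own critical sequence $\la\crit h^{(p)}\st p < \omega\ra \subseteq \rng h$ which is cofinal in $\lambda$ by Proposition \ref{basicsprop}.\ref{critscof}, yielding the desired contradiction.

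The main obstacle is confirming the left-associated representation of $\irng(k)$-elements with all factors in $\Ecal_\lambda$: this requires carefully propagating elementarity through successive applications and is the reason $\Sigma^1_m$-elementarity with $m\geq 1$ is used beyond just the existence of the inverse limit. A secondary but nontrivial matter is carefully adapting the square-root argument of Proposition \ref{dissprop} to $\ell$, with indices shifted so that disjointness falls on odd rather than even intervals.
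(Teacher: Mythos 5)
Your proposal is correct and follows essentially the same route as the paper: the same choice of $k$ and $\ell$, Proposition \ref{dissprop} (and its odd-interval analogue for $\ell$), Lemma \ref{irngempty} applied above a sufficiently large stage, and the final contradiction via the cofinality in $\lambda$ of the critical sequence of a putative common element of the two iterated ranges. You spell out two points the paper leaves implicit --- the left-associated $\Ecal_\lambda$-factorisation $h = k j_1\cdots j_m$ of elements of $\irng(k)$ needed to invoke Lemma \ref{irngempty}, and the index-shifted version of Proposition \ref{dissprop} for $\ell$ --- though note that the paper takes membership in $\Emb_\lambda$ to be $\Sigma^1_2$ (pulled back using $\Sigma^1_2$-elementarity via Martin's theorem) rather than $\Pi^1_1$ as in your parenthetical.
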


\begin{proof}  Define $k$ and $\ell$ by 
    $$k = j \circ j^{(1)} \circ j^{(2)} \circ \cdots$$
    and 
        $$\ell = j^{(1)} \circ j^{(2)} \circ \cdots.$$ By Proposition \ref{invlimelem}, $k$ and $\ell$ are $\Sigma^1_m$-elementary embeddings in $\Ecal_\lambda$. 
        
        By Proposition \ref{dissprop} we have that for $\la \kappa_n|\, n < \omega \ra$ the critical sequence of $j$, and for any $ n < \omega$, 
$$[\kappa_{2n}, \kappa_{2n+1}) \cap \Def^{V_\lambda} (\rng k \cup V_{\kappa_{2n}}) = \emptyset$$
and 
$$[\kappa_{2n+1}, \kappa_{2n+2}) \cap \Def^{V_\lambda} (\rng \ell \cup V_{\kappa_{2n+1}}) = \emptyset.$$
Hence we can apply Lemma \ref{irngempty} to $k$ to get that for any $k' \in \irng(k)$, if $n$ is large enough (so that the critical point condition is satisfied), then 
$$[\kappa_{2n}, \kappa_{2n+1}) \cap \rng k' = \emptyset.$$ 
Similarly we can apply Lemma \ref{irngempty} to $\ell$ to get that for any $\ell' \in \irng(\ell)$, if $n$ is large enough, then 
$$[\kappa_{2n+1}, \kappa_{2(n+1)}) \cap \rng \ell' = \emptyset.$$ 
But these facts together imply that there can be no common embedding in $\irng(k)$ and $\irng(\ell)$, since the critical sequence of that embedding would be cofinal in $\lambda$ and common to their ranges.
\end{proof}

We can now use this theorem to prove our main result that there are embeddings that generate a free left-distributive algebra on two generators. 




\begin{thm}\label{freealgthm}
Suppose that $k$ and $\ell$ are $\Sigma_{1}^1$-elementary embeddings in $\Ecal_{\lambda}$ with disjoint iterated ranges: $\irng (k) \cap \irng (\ell) = \emptyset$. Then $\mathcal{A}_{k, \ell}$ is isomorphic to $\mathcal{A}_2$, the free left distributive algebra on two generators.
\end{thm}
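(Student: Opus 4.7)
The plan is to consider the natural surjective homomorphism $\phi\from \Acal_2 \to \Acal_{k,\ell}$ sending the two generators $x, y$ of $\Acal_2$ to $k$ and $\ell$, and to show that it is injective. It is convenient to extend $\phi$ to a homomorphism from $\Pcal_2$ into the algebra of embeddings generated by $k, \ell$ under both application and composition; this is well-defined since the $\Pcal$-algebra identities hold for rank-to-rank embeddings. Suppose, for contradiction, that $u, v \in \Acal_2$ satisfy $\phi(u) = \phi(v)$ but $u \not\equiv_{LD} v$. By Dehornoy's quadrichotomy, one of the following holds: $u <_L v$, $v <_L u$, or $u$ and $v$ have a variable clash.

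In the left-subterm cases, say $u <_L v$, we have $v = u\cdot w_1\cdots w_s$ in $\Acal_2$ for some $w_i \in \Acal_2$, hence $\phi(u) = \phi(v) = \phi(u)\phi(w_1)\cdots\phi(w_s)$ in $\Acal_{k,\ell}$. Since each $\phi(w_i)$ is a nontrivial rank-to-rank embedding, this contradicts Laver's irreflexivity result, which asserts that no rank-to-rank embedding $h$ satisfies $h = h h_1\cdots h_s$ for nontrivial embeddings $h_i$. The case $v <_L u$ is symmetric.

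The main obstacle is the variable clash case, where the hypothesis of disjoint iterated ranges comes in. Here there exists $p \in \Pcal_2$ with $p\cdot x \leq_L u$ and $p\cdot y \leq_L v$. Set $\pi := \phi(p)$; this is an embedding in $\Ecal_\lambda$ built from $k$ and $\ell$ via applications and compositions, hence $\Sigma^1_1$-elementary by Theorem \ref{hkandhcompk}. Writing $u = (p\cdot x)\cdot c_1\cdots c_r$ in $\Pcal_2$ and applying $\phi$ gives $\phi(u) = \pi(k)\cdot \phi(c_1)\cdots \phi(c_r)$. This equals $\pi(k)$ when $r = 0$ and otherwise lies in $\irng(\pi(k))$, since each successive application lands in the range of the previous embedding. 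Thus $\phi(u) \in \{\pi(k)\}\cup\irng(\pi(k))$, and symmetrically $\phi(v) \in \{\pi(\ell)\}\cup\irng(\pi(\ell))$.

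It remains to show these two sets are disjoint. At the level of $k$ and $\ell$ themselves: the hypothesis $\irng(k)\cap\irng(\ell) = \emptyset$ rules out $k = \ell$ (else $k(k)$ would lie in $\irng(k)\cap\irng(\ell)$); it rules out $k \in \irng(\ell)$, since then one further iteration would give $k(k)\in\irng_0(k)\sseq\irng(\ell)$, again contradicting the hypothesis; and symmetrically $\ell \notin \irng(k)$. Hence $(\{k\}\cup\irng(k))\cap(\{\ell\}\cup\irng(\ell)) = \emptyset$, which is a $\Pi^1_1$ statement in the parameters $k$ and $\ell$. By $\Sigma^1_1$-elementarity (equivalently $\Pi^1_1$-elementarity) of $\pi$, this disjointness transfers to $(\{\pi(k)\}\cup\irng(\pi(k)))\cap(\{\pi(\ell)\}\cup\irng(\pi(\ell))) = \emptyset$, contradicting $\phi(u) = \phi(v)$ and completing the proof.
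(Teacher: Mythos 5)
Your overall architecture is the same as the paper's: extend the evaluation map to $\Pcal_2$, apply Dehornoy's quadrichotomy, dispose of the left-subterm cases by irreflexivity of $<_L$ on embeddings, and in the variable-clash case transfer the disjointness of iterated ranges along the image $\pi=\overline{p}$ of the common prefix to conclude $\overline{u}\in\{\pi(k)\}\cup\irng(\pi(k))$ and $\overline{v}\in\{\pi(\ell)\}\cup\irng(\pi(\ell))$ cannot coincide. Your preliminary observation that the hypothesis already forces $\bigl(\{k\}\cup\irng(k)\bigr)\cap\bigl(\{\ell\}\cup\irng(\ell)\bigr)=\emptyset$ is correct and is a nice way to package the $r=0$ subcases.

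There is, however, one step that is not justified as written: the claim that the disjointness of the iterated ranges is a $\Pi^1_1$ statement in the parameters $k$ and $\ell$, so that $\Sigma^1_1$-elementarity of $\pi$ suffices to transfer it. Unwinding the definition, the statement quantifies universally over finite tuples of \emph{second-order} objects $k_1,\dots,k_m,\ell_1,\dots,\ell_n$ and asserts ``if each $k_i$ and $\ell_j$ is in $\Emb_\lambda$, then $kk_1\cdots k_m\neq\ell\ell_1\cdots\ell_n$.'' The complexity of the whole thus depends on the complexity of the predicate ``is an elementary embedding of $V_\lambda$,'' which (per Laver, \cite[Lemma 2.1]{Laver:1997}) is only known here to be $\Sigma^1_2$; with that bound the implication in the matrix is $\Pi^1_2$, and so is the full statement after the universal second-order quantifier. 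Your assertion of $\Pi^1_1$ would need a separate argument that membership in $\Emb_\lambda$ is expressible at level $\Pi^1_1$ or below, which you do not supply. The gap is easily repaired by exactly the device the paper uses: by Theorem \ref{hkandhcompk} the embedding $\pi$ is $\Sigma^1_1$, hence by Martin's theorem (Theorem \ref{uponefree}) it is $\Sigma^1_2$, which is enough to preserve the $\Pi^1_2$ statement. With that one insertion your argument is complete and coincides with the paper's proof.
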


\begin{proof}
Consider two terms $u$ and $v$ in $A_{k, \ell}$. We need to show that they are equivalent as embeddings if and only if they are equivalent under the left distributive law. The reverse implication is immediate: if they are LD-equivalent, they are equivalent as embeddings.
It then remains to show that if two embeddings in $\Acal_{k, \ell}$ are not equivalent under the left distributive law then they are not equivalent as embeddings.

Recall from Section \ref{FreeLDs} that although $<_L$ is not a linear order on any many-generated free left distributive algebra $\mathcal{A}_{\delta}$ (for cardinal $\delta >1$), the conservative extension $\mathcal{P}_{\delta}$, and consequently also $\Acal_{\delta}$, satisfies quadrichotomy:
for any pair of terms $u$ and $v$ in $\mathcal{P}_{\delta}$, either $u$ and $v$ are equivalent, one of $u$ and $v$ is a left subterm of the other, or they have a variable clash. That is, for all $u$ and $v$ in $\mathcal{A}_{\delta}$, either $u \equiv_{LD} v$, $u <_L v$, $v <_L u$, or $u \nsim v$ 
(see \cite[Theorem 14]{Miller:2014} for a proof and \cite[Chapter V, Section 3]{Dehornoy:2000}  
for the surrounding theory of \emph{confluence}).

Let $x$ and $y$ be the generators of $\mathcal{A}_2$. If $w$ is a term in $P_2$, the 
set of terms in $x$, $y$, and the operations $\cdot$ and $\of$,
let $\overline{w} = w[k,\ell]$ be the homomorphic image of $w$ in $\calP_{k,\ell}$---the closure of $k$ and $\ell$ under application and composition---obtained from the map $\overline{x} = x[k,\ell] \rightarrow k$ and $\overline{y} = y[k, \ell] \rightarrow \ell$.

    If $\mathcal{A}_{k,\ell}$ were not free, there would be a pair $u$, $v$ in $\mathcal{A}_2$ such that $u \not\equiv_{LD} v$ but $\overline{u} = u[k,l] \equiv v[k,l] = \overline{v}$ as embeddings in $\Ecal_{\lambda}$. 
    So assume that $u$ and $v$ in $A_2$ are not LD-equivalent. We will show that $\overline{u}$ and $\overline{v}$ in $\Acal_{k, \ell}$ are not equal as embeddings.

    Because $u \not\equiv_{LD} v$ by assumption, we can apply quadrichotomy to conclude (without loss of generality) that either $u$ is a left subterm of $v$, $u <_L v$, or that the two words have a variable clash, $u \nsim v$ (as elements of the free algebra $\mathcal{A}_2$). We consider the two cases separately.

    Case 1: If $u <_L v$ as elements of $\Acal_2$, then there exist $v_1, \ldots, v_n$ in $\Acal_2$ such that $v \equiv_{LD} uv_1 \cdots v_n$. Therefore, as embeddings, $$ v[k,\ell] = u[k,\ell]v_1[k,\ell] \cdots v_n[k,\ell].$$ This proves that $\overline{u} = u[k,\ell] \neq v[k,\ell] = \overline{v}$, as we would otherwise have $\overline{u} <_L \overline{u}$, contradicting the irreflexivity of $<_L$.

    Case 2: If $u \nsim v$ in $\Acal_2$, then, when viewing $u$ and $v$ as members of $\Pcal_2$, there exists a (possibly empty) string $p$ so that $px \leq_L u$ and $py \leq_L v$ for generators $x$ and $y$, with $x \neq y$. We further note that if these inequalities are strict, there exist $v_1, \ldots, v_n$ and/or $u_1, \ldots, u_m$ such that $u = pxu_1 \cdots u_{m-1} u_m$ and $v = pyv_1 \cdots v_{n-1} v_n$.\footnote{Note that because $u$ and $v$ are in $\Acal_2$, when we write each with $p$ on the left, the operation between each of the subterms $u_i$ and $v_j$ must be application (not composition). For details, see \cite{Laver:1992} or \cite{LaverMiller:2013}.}

    The homomorphic images of the inequalities $px \leq_L u$ and $py \leq_L v$ in $\Acal_{k, \ell}$ are $\overline{p}k \leq_L \overline{u}$ and $\overline{p} \ell \leq_L \overline{v}$.

    Observe that $\overline{p}$ is either a member of $\Acal_{k, \ell}$ or a composition of members of $\Acal_{k, \ell}$.
    Either way, by Theorem \ref{hkandhcompk}, $\overline{p}$ is a $\Sigma^1_1$-elementary embedding in $\Ecal_\lambda$, and hence is $\Sigma^1_2$ by Theorem \ref{uponefree}.  Using the fact that being in $\Emb_\lambda$ is $\Sigma^1_2$ (see the proof of Lemma 2.1  in \cite{Laver:1997}), we have that the statement that $k$ and $\ell$ have disjoint iterated ranges is a $\Pi^1_2$ statement (essentially: $\forall\la\la k_1,\ldots,k_m\ra,\la \ell_1,\ldots,\ell_n\ra\ra$, if all $k_i$ and $\ell_j$ are elementary, then
    $kk_1\cdots k_m\neq\ell\ell_1\cdots\ell_n$), and it is preserved by $\overline{p}$. Hence we have that $\overline{p}k$ and $\overline{p} \ell$ have disjoint iterated ranges. 
    
    If in $\Acal_2$ either $u = px$ or $v = py$, we are done: Suppose (without loss of generality) that $u = px$. Then $\overline{v} = \overline{p} \ell \overline{v_1} \cdots \overline{v_n}$ has disjoint iterated range from $\overline{u} = \overline{p} k$ and therefore cannot be equal to $\overline{u}$.

   Suppose then that $\overline{u} = \overline{p} k \overline{u_1} \cdots \overline{u_m}$ and $\overline{v} = \overline{p} \ell \overline{v_1} \cdots \overline{v_n}$. 

We have that $\overline{p} k \overline{u_1} \cdots \overline{u_m}$ is in the iterated range of $\overline{p}k$ and $\overline{v} = \overline{p} \ell \overline{v_1} \cdots \overline{v_n}$ is in the iterated range of $\overline{p}\ell$. The  iterated ranges of $\overline{p}k$ and $\overline{p}\ell$ are, however, disjoint. Therefore, $\overline{u}$ and $\overline{v}$ cannot be equal as embeddings in $\mathcal{A}_{k,\ell}$.
\end{proof}

The two previous theorems combined can be stated as the following corollary.

\begin{cor} Suppose there exists a $\Sigma^1_1$-elementary embedding in $\Ecal_{\lambda}$. Then there are embeddings $k$ and $\ell$ in $\Ecal_{\lambda}$ such that $\Acal_{k, \ell}$, the algebra generated by the closure of $k$ and $\ell$ under application $\cdot$, is isomorphic to $\Acal_2$, the free left distributive algebra on two generators. 
\end{cor}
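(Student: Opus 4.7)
The plan is to simply chain Theorem \ref{irngthm} and Theorem \ref{freealgthm} together: the two theorems were essentially designed to compose, and the corollary records the outcome of that composition.

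More explicitly, assume the hypothesis that there is a $\Sigma^1_1$-elementary embedding $j$ in $\Ecal_\lambda$. I would first invoke Theorem \ref{irngthm} with $m = 1$ applied to $j$ to produce two $\Sigma^1_1$-elementary embeddings $k$ and $\ell$ in $\Ecal_\lambda$ such that $\irng(k) \cap \irng(\ell) = \emptyset$. These $k$ and $\ell$ then satisfy exactly the hypotheses of Theorem \ref{freealgthm}, namely being $\Sigma^1_1$-elementary rank-to-rank embeddings with disjoint iterated ranges. Applying Theorem \ref{freealgthm} to them immediately yields $\Acal_{k,\ell} \cong \Acal_2$, which is the conclusion of the corollary.

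There is essentially no obstacle to this step, since all of the real work has already been carried out in Theorem \ref{irngthm} (the inverse limit plus square root construction, propagated to iterated ranges via Lemma \ref{irngempty}) and in Theorem \ref{freealgthm} (the quadrichotomy argument using preservation of $\Pi^1_2$ disjointness statements under $\overline{p}$). The only thing one might wish to emphasize in the write-up is that the embeddings $k$ and $\ell$ obtained from Theorem \ref{irngthm} are $\Sigma^1_1$-elementary, which is precisely what lets Theorem \ref{freealgthm} apply; this is the point at which the strength of I2 (rather than mere I3) is being used at both ends of the chain.
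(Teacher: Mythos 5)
Your proposal is correct and is exactly the paper's own argument: the paper introduces this corollary with the remark that it is ``the two previous theorems combined,'' i.e.\ apply Theorem \ref{irngthm} (with $m=1$) to get $\Sigma^1_1$-elementary $k$ and $\ell$ with disjoint iterated ranges, then feed them into Theorem \ref{freealgthm}. Your added emphasis that the $k$ and $\ell$ produced are themselves $\Sigma^1_1$-elementary is the right point to flag, since that is what makes the second theorem applicable.
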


\section{Translation into Laver Tables}

In this section we translate the results of Section \ref{Main} into the setting of Laver tables, following the development in \cite{Laver:1995}. In so doing, we are able to derive a consequence that does not \emph{a priori} involve elementary embeddings.

We recall from \cite{Laver:1995} the definition of $A_k = (2^k, *_k)$ for $k < \omega$, with operation $*_k$ recursively
given by
\begin{align*}m *_k 0 &= 0, \\
m *_k 1 &= m+1\pmod{2^k}, \textrm{ and} \\
m *_k i & = [m *_k (i-1)] *_k [m *_k 1] \qquad\textrm{ for } 1 < i < 2^k.
\end{align*}
We call $A_k$ the \emph{$k^{th}$ Laver Table}. Following \cite{Laver:1995}, we define for $j$ in $\Ecal_\lambda$ and $n < \omega$,
$$\Acal_{j,n} = \{k \overset{*}{\cap} V_{\delta_n}|\, k \in \Acal_j\},$$
where $\{\delta_n|\, n < \omega \}$ is an increasing enumeration of $\crit(\Acal_j)$. We take application and composition on $\Acal_{j,n}$ to be the operations induced by those on $\Acal_j$ via the projection map $k \mapsto k \overset{*}{\cap} V_{\delta_n}$. Laver showed that the behavior of the embeddings in $\Acal_j$ on the critical family $\crit(\Acal_j)$ is witnessed by the finite algebras $A_n$. Namely, he proved:

\begin{thm}[Laver \cite{Laver:1995}]
    For any $j$ in $\Ecal_\lambda$ and any $n < \omega$, the algebras $A_n$ and $\Acal_{j,n}$ are isomorphic. 
\end{thm}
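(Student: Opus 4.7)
The plan is to construct an explicit bijective LD-homomorphism between $A_n$ and $\Acal_{j,n}$. The key structural fact about $A_n$ is that it is generated by the single element $1$: a routine induction using the recursive definition of $*_n$ shows that the left-associated product $\underbrace{1 *_n 1 *_n \cdots *_n 1}_{m \text{ factors}}$ equals $m \bmod 2^n$ for $1 \le m \le 2^n$. Correspondingly, $\Acal_{j,n}$ is generated under its induced application by $\pi_n(j) := j \overset{*}{\cap} V_{\delta_n}$, since $\Acal_j$ is generated by $j$ and the projection $\pi_n : \Acal_j \to \Acal_{j,n}$ is surjective by definition. This suggests defining
\[
\psi : A_n \longrightarrow \Acal_{j,n}, \qquad \psi(m) = \pi_n\bigl(\underbrace{j \cdot j \cdots j}_{m \text{ factors, left-associated}}\bigr),
\]
with the convention $\psi(0) = \psi(2^n)$, and proving that $\psi$ is an LD-isomorphism.

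The homomorphism property $\psi(m *_n i) = \psi(m) \cdot \psi(i)$ is established by induction on $i$, using the recursion for $*_n$, the LD law on $\Acal_{j,n}$, and the base computation $\psi(m+1) = \pi_n(\,[j]_m \cdot j\,) = \psi(m) \cdot \psi(1)$. The delicate part of this induction is handling the ``wrap-around'' at $2^n$: one must verify $\psi(m) \cdot \psi(2^n) = \psi(2^n)$ for every $m$, which encodes the periodicity of left-associated powers of $j$ modulo $V_{\delta_n}$. Surjectivity of $\psi$ is then automatic: once $\psi$ is a homomorphism, its image is an LD-subalgebra of $\Acal_{j,n}$ containing the generator $\psi(1) = \pi_n(j)$, and so the image must be all of $\Acal_{j,n}$.

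The principal obstacle is injectivity: showing that the $2^n$ projections $\psi(1), \psi(2), \ldots, \psi(2^n)$ are pairwise distinct, so that $|\Acal_{j,n}| = 2^n$. Here I would exploit the fact that $k \overset{*}{\cap} V_{\delta_n}$ is determined by the action of $k$ on $V_{\delta_n}$, and in particular by its action on the critical points $\delta_0 < \delta_1 < \cdots < \delta_{n-1}$ below $\delta_n$. Using Lemma \ref{jmsonkappans} together with a careful analysis of how $\crit(\Acal_j)$ interleaves with the critical sequence of $j$, one computes the image of each $\delta_i$ under each left-associated power of $j$ and verifies that distinct $m \in \{1, \ldots, 2^n\}$ yield distinct action patterns on $\{\delta_0, \ldots, \delta_{n-1}\}$; the very same combinatorial analysis yields the periodicity $\psi(m + 2^n) = \psi(m)$ needed to close the wrap-around case above.
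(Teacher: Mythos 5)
The paper does not prove this theorem --- it is quoted from Laver \cite{Laver:1995} --- so your proposal can only be judged on its own merits, and as it stands it has a genuine gap at exactly the point you flag as ``the principal obstacle.'' Your skeleton is the right one (and is essentially Laver's): the isomorphism is $m \mapsto j_{(m)} \overset{*}{\cap} V_{\delta_n}$ where $j_{(m)}$ is the $m$-fold left-associated power of $j$, and $A_n$ is indeed monogenic with $1 *_n 1 *_n \cdots *_n 1 = m \bmod 2^n$. But the entire content of the theorem is the injectivity/periodicity claim: that $j_{(1)}, \ldots, j_{(2^n)}$ are pairwise distinct modulo $V_{\delta_n}$ and that $j_{(2^n+m)}$ and $j_{(m)}$ agree there. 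This is equivalent to the highly nontrivial fact that $\crit(j_{(2^i)}) = \delta_i$, i.e.\ that the critical points of the \emph{left} powers enumerate the critical family $\crit(\Acal_j)$ with period-doubling, and your proposed tool for it --- Lemma \ref{jmsonkappans} --- cannot deliver this. That lemma computes the action of the \emph{right} powers $j^{(m)}$ on the critical sequence $\langle \kappa_n \rangle$; the $\delta_i$ are critical points of arbitrary members of $\Acal_j$, and the paper itself records (Dougherty's theorem) that the number of $\delta_i$ between $\kappa_n$ and $\kappa_{n+1}$ dominates the Ackermann function, so ``a careful analysis of how $\crit(\Acal_j)$ interleaves with the critical sequence'' is not a computation you can read off from the behaviour of right powers; it is the theorem you are trying to prove. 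Your auxiliary claim that $k \overset{*}{\cap} V_{\delta_n}$ is determined by the action of $k$ on the finitely many ordinals $\delta_0, \ldots, \delta_{n-1}$ is also unjustified: $k \overset{*}{\cap} V_{\delta_n}$ records $a \mapsto k(a) \cap V_{\delta_n}$ for \emph{every} $a \in V_{\delta_n}$, and the reduction to critical-point data is itself a consequence of Laver's analysis, not an input to it.

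Two smaller issues. First, the induction for the homomorphism identity $\psi(m *_n i) = \psi(m)\cdot\psi(i)$ does not close as stated: unwinding $m *_n i = [m *_n (i-1)] *_n [m *_n 1]$ produces an instance of the identity whose second argument is $m *_n 1 = m+1$, which need not be smaller than $i$, so a plain induction on $i$ is circular. The usual fix is to first prove that the set of restricted left powers is closed under application and periodic, and then verify that the induced operation satisfies the defining recursion of $A_n$ (rather than pushing the table recursion through $\psi$). Second, you take for granted that the induced operation on $\Acal_{j,n}$ is well defined (that $k \overset{*}{\cap} V_{\delta_n}$ and $l \overset{*}{\cap} V_{\delta_n}$ determine $(kl) \overset{*}{\cap} V_{\delta_n}$); the paper's setup asserts this, but in Laver's development it is proved alongside the rest and is not free.
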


For $n>m$, let $\pi_{n,m}$ be the natural projection map $\Acal_{j,n} \to \Acal_{j,m}$. Define, as in \cite{Laver:1995}, the inverse limit of these projection maps to be $\mathcal{D}_j$. We can consider $\mathcal{D}_j$ as the set
$$\mathcal{D}_j = \{k|\, \forall n < \omega( k \cap^* V_{\delta_n} \in \Acal_{j,n})\}.$$
One can similarly define the natural projection maps $\bar \pi_{n,m}$ from $A_n \to A_m$ and the corresponding inverse limit $\mathcal{D}^*$. We then have that $ \mathcal{D}^* \cong \mathcal{D}_j$.

The embeddings $k$ and $\ell$ defined in the proof of Theorem \ref{irngthm} are both in $\mathcal{D}_j$. We therefore have the following corollary of Theorem \ref{freealgthm}, the conclusion to which does not involve elementary embeddings.

\begin{cor}
    Suppose there exists a $\Sigma^1_1$-elementary embedding in $\Ecal_\lambda$. Then there are $u, v \in \mathcal{D}^*$ such that $\Acal_{u,v}$ is isomorphic to $\Acal_2$, the free left distributive algebra on two generators. 
\end{cor}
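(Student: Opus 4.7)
The plan is to apply the corollary immediately preceding this one to our $\Sigma^1_1$-elementary embedding $j$, obtaining witnesses $k,\ell\in\Ecal_\lambda$ of the specific form constructed in Theorem~\ref{irngthm}, then verify that these particular $k,\ell$ lie in $\mathcal{D}_j$, and finally transport them through the isomorphism $\mathcal{D}^*\cong\mathcal{D}_j$ to obtain the desired $u,v\in\mathcal{D}^*$ with $\mathcal{A}_{u,v}\cong\mathcal{A}_{k,\ell}\cong\mathcal{A}_2$.

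First, I recall that the proof of the preceding corollary applies Theorems~\ref{irngthm} and~\ref{freealgthm} to the specific inverse limits $k=j\circ j^{(1)}\circ j^{(2)}\circ\cdots$ and $\ell=j^{(1)}\circ j^{(2)}\circ\cdots$, so for these explicit $k,\ell$ we already have $\mathcal{A}_{k,\ell}\cong\mathcal{A}_2$. The substantive new work is showing $k,\ell\in\mathcal{D}_j$, i.e., that $k\cap^* V_{\delta_n}\in\mathcal{A}_{j,n}$ for every $n<\omega$ (and similarly for $\ell$). For a fixed $n$, since the critical points $\crit j^{(i)}=\kappa_i$ are cofinal in $\lambda$ by Proposition~\ref{basicsprop}, I can choose $N$ large enough that $\kappa_{N+1}>\delta_n$, so that $j^{(i)}$ fixes $V_{\delta_n}$ pointwise for every $i>N$. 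Then $k$ and the finite composite $j\circ j^{(1)}\circ\cdots\circ j^{(N)}$ agree on $V_{\delta_n}$, giving
\[
k\cap^* V_{\delta_n}=\bigl(j\circ j^{(1)}\circ\cdots\circ j^{(N)}\bigr)\cap^* V_{\delta_n}.
\]
Each fragment $j^{(i)}\cap^* V_{\delta_n}$ lies in $\mathcal{A}_{j,n}$ because $j^{(i)}\in\mathcal{A}_j$, so the remaining task is to know that $\mathcal{A}_{j,n}$ is closed under function composition of fragments. This follows because the finite algebra $\mathcal{A}_{j,n}\cong A_n$ carries a natural composition operation compatible with function composition of the fragments, and in the finite Laver table composition is expressible as an iterated application via the braid identity $a\circ b=ab\circ a$ together with left-distributivity $a(b\circ c)=ab\circ ac$. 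Hence $k\cap^* V_{\delta_n}\in\mathcal{A}_{j,n}$, and the same argument works verbatim for $\ell$, placing both in $\mathcal{D}_j$.

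Finally, let $u,v\in\mathcal{D}^*$ be the preimages of $k,\ell$ under the isomorphism $\mathcal{D}^*\cong\mathcal{D}_j$. Because this isomorphism respects the application operation (it is built from the projection/inverse-limit maps, which are algebra homomorphisms), it restricts to an isomorphism from $\mathcal{A}_{u,v}$ onto $\mathcal{A}_{k,\ell}$, giving $\mathcal{A}_{u,v}\cong\mathcal{A}_{k,\ell}\cong\mathcal{A}_2$ as required.

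The main obstacle is the intermediate closure step: justifying that the fragment of the finite composite $j\circ j^{(1)}\circ\cdots\circ j^{(N)}$ on $V_{\delta_n}$ is realized by a single element of $\mathcal{A}_j$, i.e., that $\mathcal{A}_{j,n}$ is closed under function composition of its members. Everything else is a direct transport through the inverse-limit isomorphism and an appeal to the preceding corollary.
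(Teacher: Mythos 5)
Your proposal is correct and follows essentially the same route as the paper, which simply asserts that the two inverse limits $k$ and $\ell$ from Theorem~\ref{irngthm} lie in $\mathcal{D}_j$ and then transports them through the isomorphism $\mathcal{D}^*\cong\mathcal{D}_j$; you in fact supply more detail for the membership claim than the paper does. The one step you identify as the main obstacle --- that $(j\circ j^{(1)}\circ\cdots\circ j^{(N)})\starint{\delta_n}$ lies in $\mathcal{A}_{j,n}$ --- is precisely Laver's theorem from the cited 1995 paper that the fragments of the application--composition closure $\mathcal{P}_j$ coincide with those of $\mathcal{A}_j$, so it is cleanest to cite that result directly rather than re-derive it informally from the braid law.
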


\bibliographystyle{plain}
\bibliography{BTCMreferences}
\end{document}